\titleformat{\subsection}{\it}{\thesubsection.\enspace}{1pt}{}
\newtheorem{theo}{Theorem}[section]
\newtheorem{lemm}[theo]{Lemma}
\newtheorem{prop}[theo]{Proposition}
\newtheorem{rema}[theo]{Remark}
\numberwithin{equation}{section}
\newcommand\lm{{\lesssim}}
\begin{document}
\title{Global strong solutions and large time behavior to the compressible co-rotation FENE dumbbell model of polymeric flows near equilibrium
\hspace{-4mm}
}

\author{ Zhaonan $\mbox{Luo}^1$ \footnote{email: 1411919168@qq.com},\quad
Wei $\mbox{Luo}^1$\footnote{E-mail:  luowei23@mail2.sysu.edu.cn} \quad and\quad
 Zhaoyang $\mbox{Yin}^{1,2}$\footnote{E-mail: mcsyzy@mail.sysu.edu.cn}\\
 $^1\mbox{Department}$ of Mathematics,
Sun Yat-sen University, Guangzhou 510275, China\\
$^2\mbox{Faculty}$ of Information Technology,\\ Macau University of Science and Technology, Macau, China}

\date{}
\maketitle
\hrule

\begin{abstract}
In this paper, we mainly study global well-posedness and optimal decay rate for the strong solutions of the compressible co-rotation finite extensible nonlinear elastic (FENE) dumbbell model. This model is a coupling of the isentropic compressible Navier-Stokes equations with a nonlinear Fokker-Planck equation. We first prove that the FENE dumbbell model admits a unique global strong solution provided the initial data are close to equilibrium state for $d\geq 2$. Moreover, for $d\geq3$, we show that optimal decay rates of global strong solutions by the linear spectral theory and a more precise Hardy type inequality.  \\
\vspace*{5pt}
\noindent {\it 2020 Mathematics Subject Classification}: 35Q30, 35Q84, 76N10,76D05.

\vspace*{5pt}
\noindent{\it Keywords}: The compressible co-rotation FENE dumbbell model; Global strong solutions; optimal decay rate.
\end{abstract}

\vspace*{10pt}

\tableofcontents

\section{Introduction}
   In this paper we consider the compressible finite extensible nonlinear elastic (FENE) dumbbell model \cite{Bird1977,Doi1988}:
\begin{align}\label{eq0}
\left\{
\begin{array}{ll}
\varrho_t+{\rm div}(\varrho u)=0 , \\[1ex]
(\varrho u)_t+{\rm div}(\varrho u\otimes u)-{\rm div}\Sigma{(u)}+\frac 1 {Ma^2} \nabla{P(\varrho)}=\frac 1 {De} \frac {\kappa} {r} {\rm div}~\tau, \\[1ex]
\psi_t+u\cdot\nabla\psi={\rm div}_{R}[- \sigma(u)\cdot{R}\psi+\frac {\sigma} {De}\nabla_{R}\psi+\frac {1} {De\cdot r}\nabla_{R}\mathcal{U}\psi],  \\[1ex]
\tau_{ij}=\int_{B}(R_{i}\nabla_{Rj}\mathcal{U})\psi dR, \\[1ex]
\varrho|_{t=0}=\varrho_0,~~u|_{t=0}=u_0,~~\psi|_{t=0}=\psi_0, \\[1ex]
(\sigma\nabla_{R}\psi+\frac {1} {r}\nabla_{R}\mathcal{U}\psi)\cdot{n}=0 ~~~~ \text{on} ~~~~ \partial B(0,R_{0}) .\\[1ex]
\end{array}
\right.
\end{align}
In \eqref{eq0}, $\varrho(t,x)$ is the density of the solvent, $u(t,x)$ stands for the velocity of the polymeric liquid and $\psi(t,x,R)$ denotes the distribution function for the internal configuration. Here the polymer elongation $R$ is bounded in ball $ B=B(0,R_{0})$ which means that the extensibility of the polymers is finite and $x\in\mathbb{R}^d$. The notation $\Sigma{(u)}=\mu(\nabla u+\nabla^{T} u)+\mu'{\rm div}~u\cdot Id$ represents the stress tensor, with the viscosity coefficients $\mu$ and $\mu'$ satisfying $\mu>0$ and $2\mu+\mu'>0$. $\sigma$ is a constant satisfied the relation $\sigma=k_BT_a$, where $k_B$ is the Boltzmann constant, $T_a$ stands for the absolute temperature.  Moreover, $\kappa > 0$ denotes the ratio between kinetic and elastic energy and $r>0$ is related to the linear damping mechanism in dynamics of the microscopic variable $R$.
The parameter $De$ is the Deborah number, which stands for the ratio of the time scales for elastic stress relaxation. It measures the fluidity of the system. The smaller the Deborah number is, the system behaves more like a Newtonian fluid. Furthermore, the Mach number $Ma$ denotes the ratio between the fluid velocity and the sound speed, so it characterizes the compressibility of the system.
The pressure satisfies the so-called $\gamma$-law: $P(\varrho)=a\varrho^\gamma$ with $\gamma\geq1, a>0$. $\tau$ is an additional stress tensor.
For the compressible FENE dumbbell model, the potential $\mathcal{U}(R)=-k\log(1-(\frac{|R|}{|R_{0}|})^{2})$ for some constant $k>0$. $\sigma(u)$ is the drag term. In the co-rotation case, $\sigma(u)=\frac {\nabla u-(\nabla u)^{T}} {2}$. In the general case, $\sigma(u)=\nabla u$.

 This model describes the system coupling fluids and polymers. The system is of great interest in many branches of physics, chemistry, and biology, see \cite{Bird1977,Doi1988}. In this model, a polymer is idealized as an "elastic dumbbell" consisting of two "beads" joined by a spring that can be modeled by a vector $R$. The polymer particles are studied by a probability function $\psi(t,x,R)$ satisfying that $\int_{B}
\psi(t,x,R)dR =1$, which stands for the distribution of particles elongation vector $R\in B$. At the level of liquid, the system couples the compressible Navier-Stokes equations for the fluid velocity with a Fokker-Planck equation describing the evolution of the polymer density. This is a micro-macro model (For more details, one can refer to  $\cite{Masmoudi2008}$ and $\cite{Masmoudi2013}$).

In this paper we will take $a,~\sigma,~\kappa,~r,~De,~Ma$ and $R_{0}=1$.
Notice that $(\varrho,u,\psi)$ with $\varrho=1$, $u=0$ and $$\psi_{\infty}(R)=\frac{e^{-\mathcal{U}(R)}}{\int_{B}e^{-\mathcal{U}(R)}dR}=\frac{(1-|R|^2)^k}{\int_{B}(1-|R|^2)^kdR},$$
is a trivial solution of \eqref{eq0}. Then we study the perturbations near the global equilibrium:
\begin{align*}
\rho=\varrho-1,~~u=u,~~g=\frac {\psi-\psi_\infty} {\psi_\infty}.
\end{align*}
By a simple calculation, we get
\begin{align}\label{co}
&{\rm div}_R([(\nabla u-(\nabla u)^T]R\psi_\infty)
=\sum_{i,j}\partial_{R_i}[(\partial_iu^j-\partial_ju^i)R_j\psi_\infty]
\\ \notag
&=\sum_{i,j}(\partial_iu^j-\partial_ju^i)\delta_{ij}\psi_\infty+\sum_{i,j}\frac{2k(\partial_iu^j-\partial_ju^i)R_jR_i(1-|R|^2)^{k-1}}{\int_{B}(1-|R|^2)^kdR}
=0.
\end{align}
Then we can rewrite \eqref{eq0} for the co-rotation case ($\sigma(u)=\frac {\nabla u-(\nabla u)^{T}} {2}$)  as the following system:
\begin{align}\label{eq1}
\left\{
\begin{array}{ll}
\rho_t+{\rm div}~u(1+\rho)=-u\cdot\nabla\rho , \\[1ex]
u_t-\frac 1 {1+\rho} {\rm div}\Sigma{(u)}+\frac {P'(1+\rho)} {1+\rho} \nabla\rho=-u\cdot\nabla u+\frac 1 {1+\rho} {\rm div}~\tau, \\[1ex]
g_t+\mathcal{L}g=-u\cdot\nabla g-\frac 1 {\psi_\infty}\nabla_R\cdot(\sigma(u)Rg\psi_\infty),  \\[1ex]
\tau_{ij}(g)=\int_{B}(R_{i}\nabla_{Rj}\mathcal{U})g\psi_\infty dR, \\[1ex]
\rho|_{t=0}=\rho_0,~~u|_{t=0}=u_0,~~g|_{t=0}=g_0, \\[1ex]
\psi_\infty\nabla_{R}g\cdot{n}=0 ~~~~ \text{on} ~~~~ \partial B(0,1) ,\\[1ex]
\end{array}
\right.
\end{align}
where $\mathcal{L}g=-\frac 1 {\psi_\infty}\nabla_R\cdot(\psi_\infty\nabla_{R}g)$.

{\bf Remark.} As in the reference \cite{Masmoudi2013}, one can deduce that $\psi=0$ on $\partial B(0,1)$.

\subsection{Short reviews for the incompressible FENE dumbbell model}
We first review some mathematical results about the incompressible FENE dumbbell model. M. Renardy \cite{Renardy} established the local well-posedness in Sobolev spaces with potential $\mathcal{U}(R)=(1-|R|^2)^{1-\sigma}$ for $\sigma>1$. Later, B. Jourdain, T. Leli\`{e}vre, and
C. Le Bris \cite{Jourdain} proved local existence of a stochastic differential equation with potential $\mathcal{U}(R)=-k\log(1-|R|^{2})$ in the case $k>3$ for a Couette flow. H. Zhang and P. Zhang \cite{Zhang-H} proved local well-posedness for the FENE equation with $d=3$ in weighted Sobolev spaces. For the co-rotation case, F. Lin, P. Zhang, and Z. Zhang \cite{F.Lin} obtained a global existence results with $d=2$ and $k > 6$. If the initial data is perturbation around equilibrium, N. Masmoudi \cite{Masmoudi2008} proved global well-posedness for for the general case and $k>0$. In the co-rotation case with $d=2$, he \cite{Masmoudi2008} obtained a global result $k>0$ without any small conditions. In the co-rotation case, A. V. Busuioc, I. S. Ciuperca, D. Iftimie and L. I. Palade \cite{Busuioc} obtained a global existence result with only the small condition on $\psi_0$. The global existence of weak solutions in $L^2$ for the general case and  was proved recently by N. Masmoudi \cite{Masmoudi2013} under some entropy conditions. In this paper, he point out that global existence of strong solutions for the general case is an open problem.

M. Schonbek \cite{Schonbek} studied the $L^2$ decay of the weak solutions for the co-rotation
FENE dumbbell model, and obtained the
decay rate $(1+t)^{-\frac{d}{4}+\frac{1}{2}}$, $d\geq 2$ with $u_0\in L^1$.
Moreover, she conjectured that the sharp decay rate should be $(1+t)^{-\frac{d}{4}}$,~$d\geq 2$.
However, she failed to get it because she could not use the bootstrap argument as in \cite{Schonbek1985} due to the
additional stress tensor.  Recently, W. Luo and Z. Yin \cite{Luo-Yin} improved Schonbek's result
and showed that the decay rate is $(1+t)^{-\frac{d}{4}}$ with $d\geq 3$ and $\ln^{-l}(1+t)$ with $d=2$ for any $l\in\mathbb{N^+}$.
This result shows that M. Schonbek's conjecture is true when $d \geq3$. More recently, W. Luo and Z. Yin \cite{Luo-Yin2} improved the decay rate to $(1+t)^{-\frac{d}{4}}$ with $d=2$.

\subsection{Short reviews for the CNS equations}
Taking $\psi\equiv 0$, the system \eqref{eq0} reduce to the compressible Navier-Stokes (CNS) equations. In order to study about the \eqref{eq0}, we have to cite some reference about the CNS equations. The first local existence and uniqueness results were obtained by J. Nash \cite{miaocompressns23} for smooth initial data without vacuum. Later on, A. Matsumura and T. Nishida \cite{Matsumura} proved the global well-posedness for smooth data close to equilibrium in $H^3\times H^2$. In \cite{miaocompressns18}, A. V. Kazhikhov and V. V. Shelukhin established the first global existence result with large data in one dimensional space under some suitable condition on $\mu$ and $\lambda$. If $\mu$ is constant and $\lambda(\rho)=b\rho^\beta$, X. Huang and J. Li \cite{Huang} obtained a global existence and uniqueness result for large initial data in two dimensional space(See also \cite{miaocompressns26}). In \cite{miaocompressns25}, X. Huang, J. Li, and Z. Xin proved the global well-posedness with vacuum. The blow-up phenomenons were studied by Z. Xin et al
in \cite{Xin98,miaocompressns28,miaocompressns27}. Concerning the global existence of weak solutions for the large initial data, we may refer to \cite{miaocompressns2,miaocompressns3,miaocompressns21,Vasseur}.

To catch the scaling invariance property of the CNS equations. R. Danchin introduced the critical spaces in his series papers \cite{miaocompressns11,miaocompressns12,miaocompressns14,miaocompressns15,miaocompressns155} and obtained several important existence and uniqueness results. Recently, Q. Chen, C. Miao and Z. Zhang \cite{miao} proved the local existence and uniqueness in critical homogeneous Besov spaces. The ill-posedness result was obtained in \cite{miaoill-posedness}. In \cite{miaocompressns24}, L. He, J. Huang and C. Wang proved the global stability with $d=3$ i.e. for any perturbed solutions will remain close to the reference solutions if initially they are close to another one.

The large time behaviour was proved by A. Matsumura and T. Nishida in \cite{Matsumura}. H. Li and T. Zhang \cite{Li2011Large} obtained the optimal time decay rate for the CNS equations by spectrum analysis in Sobolev spaces. Recently, J. Xu \cite{Xu2019} studied about the large time behaviour in the critical Besov space and obtain the optimal time decay rate.

\subsection{Main results}

The well-posedness in $H^3$ with the Hooke type potential and $d=3$ for the system \eqref{eq0} was established by N. Jiang, Y. Liu and T. Zhang \cite{2017Global}. They proved the global well-posedness for \eqref{eq0} if the initial data is close to the equilibrium. In \cite{2017Global}, the authors assume that $R\in\mathbb{R}^3$ which means that polymer elongation may be infinite. Actually, the polymer elongation $R$ is usually bounded.

Recently, N. Masmoudi \cite{2016Equations} was concerned with long time behavior for polymeric models. To our best knowledge, well-posedness and large time behaviour for the system \eqref{eq0} with finite polymer elongation has not been studied yet. In this paper, we establish global well-posedness result for the compressible co-rotation FENE equation \eqref{eq0} if the initial data is close to the equilibrium. The key point is to prove a priori estimate which is global in time for \eqref{eq1} with small data. Compared to \eqref{eq0} with the Hooke potential $\mathcal{U}(R)=\frac{1}{2}|R|^2$, the main difficult for the FENE system is to control the stress tensor $\tau$. That is why many researchers establish different Hardy type inequalities. Taking advantage of dissipative structure of \eqref{eq1} and the interpolation method, we obtain the lower order energy estimates. By virtue of the Hardy type inequality \cite{Masmoudi2008} and cancellation relation between the compressible Navier-Stokes equations and Fokker-Planck equation, we get the higher order derivatives estimates for \eqref{eq1}. Combining the lower and higher order estimates, we deduce a closed estimate which is global in time. The result of the global existence of
strong solution of the current manuscript is an extension of \cite{Masmoudi2008} to the compressible fluid.

Moreover, we study about large time behaviour and obtain optimal decay rate for the velocity in $L^2$. The proof is based on the linear spectral theory and $L^2$ energy estimate method with the conditions $(\rho_0,u_0)\in L^1\times L^1$. The main difficulty is to estimate the additional linear term ${\rm div}~\tau$. To get $L^2$ decay rate, we prove a more precise Hardy type inequality to control the extra stress tensor $\tau$ by improving the method in \cite{Masmoudi2013}. Since ${\rm div}~\tau$ is a linear term, it follows that we have to estimate the $L^1$-norm for $\tau$. For this purpose, we add the condition $g_0\in L^1(\mathcal{L}^p)$ with $(p-1)k>1$ and $2\leq p$. In the incompressible case, one can prove that $\|\tau\|_{L^1}\leq C$ for any $t$ (See \cite{Masmoudi2008}). However, in the compressible case, we only obtain that $\|\tau \|_{L^1}\leq Ce^{\sqrt{t}}$, which is exponentially growth in time. This is too bad for study about optimal decay rate.  Fortunately, in the co-rocation case, we see that the $L^2(\mathcal{L}^2)$ norm $g$ is exponentially decay in time. Although we can not obtain exponential decay estimate of $\tau$ through the new Hardy estimate, we deduce a time weighted estimate, which can control the growth trend of $\tau$ in $L^p$ with $p>1$. Finally, we obtain optimal time decay rate for the velocity in $L^2$ by using the time weighted estimate and a different absorption method. The result of optimal decay rate for
strong solution of the current manuscript is an extension of \cite{Luo-Yin} to the compressible fluid.
Compared to the previous literature \cite{Masmoudi2008}, the new Hardy type inequality improves the estimate for $\tau$ with $0<k\leq 1$, which will be useful for the FENE system in the future.

Our main results can be stated as follows:
\begin{theo}[Global well-posedness]\label{th1}
Let $d\geq 2~and~s>1+\frac d 2$. Let $(\rho,u,g)$ be a strong solution of \eqref{eq1} with the initial data $(\rho_0,u_0,g_0)$ satisfying the conditions $\int_B g_{0}\psi_{\infty}dR=0$ and $1+g_0>0$. Then, there exists some sufficiently small constant $\epsilon_0$ such that if
\begin{align}
E(0)=\|\rho_0\|^2_{H^s}+\|u_0\|^2_{H^s}+\|g_0\|^2_{H^s(\mathcal{L}^2)}\leq \epsilon_0,
\end{align}
then the compressible system \eqref{eq1} admits a unique global strong solution $(\rho,u,g)$ with $\int_B g\psi_{\infty}dR=0$ and $1+g>0$, and we have
\begin{align}
\sup_{t\in[0,+\infty)} E(t)+\int_{0}^{\infty}D(t)dt\leq \epsilon,
\end{align}
where $\epsilon$ is a small constant dependent on the viscosity coefficients.
\end{theo}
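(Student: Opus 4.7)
The plan is a standard local-plus-continuation scheme. I would first establish local-in-time existence in $H^s$, prove a uniform-in-time a priori estimate of the shape
\[
\frac{d}{dt}\mathcal{E}(t)+\mathcal{D}(t)\le C\sqrt{\mathcal{E}(t)}\,\mathcal{D}(t),
\]
with $\mathcal{E}(t)\sim E(t)$ and dissipation functional $\mathcal{D}(t):=\|\nabla\rho\|_{H^{s-1}}^2+\|\nabla u\|_{H^{s}}^2+\|\nabla_R g\|_{H^{s}(\mathcal{L}^2)}^2$, and then run a bootstrap to extend the local solution to $[0,\infty)$ once $E(0)\le\epsilon_0$ is small enough that $C\sqrt{\epsilon_0}\le\tfrac12$. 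Local well-posedness in $H^s$ for $s>1+d/2$ is obtained via a Friedrichs-type approximation that exploits the symmetric-hyperbolic structure of the continuity equation, the parabolic regularity of the viscous momentum equation, and the dissipation of $\mathcal{L}$ on $\mathcal{L}^2(\psi_\infty dR)$. Both structural constraints propagate in time: integrating the $g$-equation against $\psi_\infty$ in $R$ shows that $\int_B g\psi_\infty dR$ is transported by $u$ and hence remains zero, while a parabolic maximum principle applied to $\psi=(1+g)\psi_\infty$ preserves positivity.

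For the lower-order part of the a priori estimate I would test the $u$-equation against $(1+\rho)u$ and the $g$-equation against $g$ in $L^2_x(\mathcal{L}^2)$. The co-rotation structure supplies the decisive cancellation: because $\sigma(u)$ is antisymmetric and $R\otimes R$ symmetric, the potentially dangerous term $\int\!\!\int \sigma(u)Rg\cdot\nabla_R g\,\psi_\infty\,dR\,dx$ collapses after one integration by parts to something of the form $\int\mathrm{div}\,u\,\|g\|_{\mathcal{L}^2}^2\,dx$, which is cubic and harmless. The transport contributions $u\cdot\nabla\rho$ and $u\cdot\nabla g$ are absorbed in the same way. The Poincar\'e-type inequality $\|g\|_{\mathcal{L}^2}\lesssim\|\nabla_R g\|_{\mathcal{L}^2}$, valid thanks to the zero-mean condition on $g$, then upgrades the $\nabla_R g$ dissipation to full control of $\|g\|_{\mathcal{L}^2}$. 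Dissipation for $\rho$, which carries no diffusion of its own, is recovered by the Matsumura--Nishida cross-term trick: pairing $\nabla\rho$ against the momentum equation, the pressure term yields $\|\nabla\rho\|_{L^2}^2$ and the remaining contributions are absorbed by the viscous dissipation and the nonlinear error.

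The most delicate step is the higher-order estimate, where I apply $\partial^\alpha$ for $|\alpha|\le s$ and test against $\partial^\alpha u$ and $\partial^\alpha g$. The critical issue is the coupled pair
\[
\int\partial^\alpha u\cdot\partial^\alpha\!\left(\tfrac{1}{1+\rho}\,\mathrm{div}\,\tau\right)dx
\quad\text{and}\quad
\int\!\!\int\partial^\alpha g\cdot\partial^\alpha\!\left(\tfrac{1}{\psi_\infty}\nabla_R\!\cdot(\sigma(u)Rg\psi_\infty)\right)\psi_\infty\,dR\,dx,
\]
which at first sight requires a $\partial^\alpha\tau$ bound that is not available at this regularity level. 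The point is to integrate by parts carefully (in $x$ in the first integral, in $R$ in the second) so that the principal pieces carrying all $\partial^\alpha$ derivatives on $u$ and $g$ match in sign and cancel exactly; this is the cancellation between the compressible Navier--Stokes part and the Fokker--Planck part that the introduction alludes to, and it is the main structural obstacle of the proof. The surviving commutator remainders are genuinely lower-order, and by exploiting that $s>1+d/2$ makes $H^{s-1}$ an algebra together with Moser-type tame product estimates, they are bounded by $\sqrt{\mathcal{E}}\,\mathcal{D}$ as required.

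Once the differential inequality is in place, a continuity argument finishes the proof: if $T^{\ast}$ denotes the maximal time on which $\mathcal{E}(t)\le\epsilon$, then on $[0,T^{\ast})$ we have $\tfrac{d}{dt}\mathcal{E}+\tfrac12\mathcal{D}\le 0$, so $\mathcal{E}$ never exceeds $E(0)\le\epsilon_0<\epsilon$, and the local existence theorem extends the solution past $T^{\ast}$, forcing $T^{\ast}=+\infty$. Uniqueness in the prescribed regularity class follows from a routine $L^2$-energy estimate on the difference of two solutions.
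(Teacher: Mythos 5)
Your overall architecture (local existence, a priori estimate of the form ``$\tfrac{d}{dt}\mathcal{E}+\mathcal{D}\le C\sqrt{\mathcal{E}}\,\mathcal{D}$'', continuity argument) is the same as the paper's, and your lower-order reasoning and the cross-term (Matsumura--Nishida) device for $\|\nabla\rho\|^2$ are all in line with what the paper does. However, the way you close the top-order estimate contains a genuine gap.

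You claim that, after integrating by parts in $x$ and $R$, the terms
\[
\int\partial^\alpha u\cdot\partial^\alpha\Bigl(\tfrac{1}{1+\rho}\,\mathrm{div}\,\tau\Bigr)\,dx
\qquad\text{and}\qquad
\bigl\langle\partial^\alpha g,\ \partial^\alpha\bigl(\tfrac{1}{\psi_\infty}\nabla_R\!\cdot(\sigma(u)Rg\psi_\infty)\bigr)\bigr\rangle
\]
``match in sign and cancel exactly.'' In the \emph{co-rotation} model this cancellation does not exist, and it cannot: by the identity \eqref{co}, the linear drag $\tfrac{1}{\psi_\infty}\nabla_R\cdot(\sigma(u)R\psi_\infty)$ vanishes identically, so the Fokker--Planck coupling written above is already \emph{cubic} in $(u,g)$. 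Its highest-order piece (all $\partial^\alpha$ on $g$) vanishes again by \eqref{co}, and the piece with $\partial^\alpha$ on $u$ is bounded by $\|g\|_{L^\infty(\mathcal{L}^2)}\|\nabla u\|_{H^s}\|\nabla_R g\|_{H^s(\mathcal{L}^2)}=O(\sqrt{\mathcal{E}})\mathcal{D}$. In contrast, $\mathrm{div}\,\tau$ is \emph{bilinear}: after integrating by parts and using Lemmas \ref{Lemma1}--\ref{Lemma2} it gives $-\!\int\nabla\Lambda^s u:\Lambda^s\tau\,dx\le\delta\|\nabla\Lambda^s u\|_{L^2}^2+C_\delta\|\nabla_R\Lambda^s g\|_{L^2(\mathcal{L}^2)}^2$, whose second summand is $O(1)\cdot\mathcal{D}$, not $O(\sqrt{\mathcal{E}})\cdot\mathcal{D}$. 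There is simply no other term of the same type to cancel it; the two objects you listed live at different polynomial orders in the unknowns. Consequently the differential inequality you wrote cannot be reached by cancellation alone.

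What the paper does instead is an \emph{absorption} argument: since the Hardy-type inequality (Lemma \ref{Lemma2} combined with Lemma \ref{Lemma1}) controls $|\Lambda^s\tau|$ pointwise in $x$ by $\|\nabla_R\Lambda^s g\|_{\mathcal{L}^2}$ --- so, contrary to your remark, a $\partial^\alpha\tau$ bound \emph{is} available at this regularity level --- the residual $C_\delta\|\nabla_R\Lambda^s g\|^2$ is swallowed by weighting the Fokker--Planck part of the energy and dissipation by a large constant $\lambda$ (the $\lambda\|g\|_{H^s(\mathcal{L}^2)}^2$ and $\lambda\|\nabla_R g\|_{H^s(\mathcal{L}^2)}^2$ in $E_\eta$, $D_\eta$), choosing $\lambda$ after $\delta$ and then $\epsilon,\eta$ small. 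Without this weighting your estimate does not close. (Two minor points: your lower-order statement that the drag term ``collapses to $\int\mathrm{div}\,u\,\|g\|_{\mathcal{L}^2}^2$'' is a slight conflation --- in the co-rotation case it vanishes identically by \eqref{co}, while the $\int\mathrm{div}\,u\,\|g\|_{\mathcal{L}^2}^2$ piece comes from the transport $u\cdot\nabla g$; and the paper's local existence uses a Picard-type iteration rather than a Friedrichs approximation, a cosmetic difference.)
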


\begin{theo}[Large time behaviour]\label{th2}
Let $d\geq 3$. Let $(\rho,u,g)$ be a strong solution of \eqref{eq1} with the initial data $(\rho_0,u_0,g_0)$ under the condition in Theorem \ref{th1}. In addition, if $(\rho_0,u_0)\in L^1\times L^1$ and $g_0\in L^1(\mathcal{L}^p)$ with $(p-1)k>1$ and $2\leq p$, then there exists a constant $C$ such that
\begin{align}\label{decay}
\|\rho\|_{L^2}+\|u\|_{L^2}\leq C(1+t)^{-\frac d 4},
\end{align}
and
\begin{align}
\|g\|_{L^2(\mathcal{L}^2)}\leq Ce^{-Ct}.
\end{align}
\end{theo}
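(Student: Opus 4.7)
The plan is to decouple the polynomial $L^2$ decay of $(\rho,u)$ from the much faster exponential decay of the polymeric perturbation $g$, and to treat the linear stress forcing $\operatorname{div}\tau$ in the velocity equation as a harmless source once $g$ is known to decay exponentially. I will work in three stages.

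First I would establish $\|g(t)\|_{L^2_x(\mathcal{L}^2)}\lesssim e^{-Ct}$ together with higher-norm analogues. Pair the $g$-equation with $g\psi_\infty$ in the $R$-variable and integrate by parts: the transport term $u\cdot\nabla g$ yields only $\tfrac12\int\operatorname{div}u\,|g|^2\psi_\infty\,dR\,dx$, which is $\lesssim \|\nabla u\|_{L^\infty}\|g\|_{L^2(\mathcal{L}^2)}^2$, while the co-rotation drag term $\frac1{\psi_\infty}\nabla_R\!\cdot(\sigma(u)R g\psi_\infty)$ cancels exactly as in \eqref{co} because $\sigma(u)$ is antisymmetric, so both $\operatorname{tr}\sigma(u)=0$ and $(\sigma(u)R)\cdot R\equiv 0$. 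The remaining dissipation $\|\mathcal{L}^{1/2}g\|_{\mathcal{L}^2}^2$ dominates $C\|g\|_{\mathcal{L}^2}^2$ by the weighted Poincar\'e inequality in the ball under the cancellation $\int_B g\psi_\infty\,dR=0$. Since Theorem \ref{th1} provides $\|\nabla u\|_{L^\infty}\lesssim \varepsilon$ for all times, Gronwall yields exponential decay; the same computation on derivatives $\partial^\alpha g$, combined with the uniform $H^s$ bound, upgrades to $\|g\|_{H^s(\mathcal{L}^2)}\lesssim e^{-Ct}$.

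Second, I would prove a Hardy-type bound converting $g$ in $\mathcal{L}^p$ into $\tau$ in $L^q_x$. Since $\mathcal{U}=-k\log(1-|R|^2)$ gives $|R\otimes\nabla_R\mathcal{U}|\lesssim (1-|R|^2)^{-1}$ and $\psi_\infty\sim (1-|R|^2)^k$, H\"older in $R$ with conjugate exponents $p$ and $p/(p-1)$ produces
\begin{equation*}
|\tau(x)|\lesssim \Bigl(\int_B (1-|R|^2)^{-\frac{p}{p-1}}\psi_\infty\,dR\Bigr)^{\!\frac{p-1}{p}}\|g(x,\cdot)\|_{\mathcal{L}^p},
\end{equation*}
and the $R$-integral is finite exactly because $(p-1)k>1$. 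Propagating the mass of $g$ in $L^1_x(\mathcal{L}^p)$ by a direct energy estimate on the $g$-equation gives at worst $e^{\sqrt t}$ growth of $\|\tau\|_{L^1_x}$, which, interpolated with the exponential $L^2_x(\mathcal{L}^2)$ decay from the first stage, in turn yields an exponentially decaying bound $\|\tau\|_{L^1_x\cap L^2_x}\lesssim e^{-\delta t}$.

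Third, the polynomial decay of $(\rho,u)$ comes from the linearized compressible Navier-Stokes semigroup. Writing the first two equations of \eqref{eq1} as $\partial_tU+\mathcal{A}U=F(\rho,u)+(0,\operatorname{div}\tau)^{\top}$ with $U=(\rho,u)$, the spectral analysis of Li-Zhang gives the low-frequency bound $\|e^{-t\mathcal{A}}U_0\|_{L^2}\lesssim (1+t)^{-d/4}\|U_0\|_{L^1}$ and exponential decay on high frequencies controlled by the $H^s$ bound from Theorem \ref{th1}. Duhamel then yields
\begin{equation*}
\|U(t)\|_{L^2}\lesssim (1+t)^{-d/4}\|U_0\|_{L^1\cap L^2}+\int_0^t(1+t-s)^{-\frac{d}{4}-\frac{1}{2}}\bigl(\|F(s)\|_{L^1}+\|\tau(s)\|_{L^1}\bigr)ds.
\end{equation*}
Setting $M(t)=\sup_{0\leq s\leq t}(1+s)^{d/4}\|U(s)\|_{L^2}$, the nonlinear terms are quadratic and estimated via $\|u\cdot\nabla u\|_{L^1}\lesssim\|u\|_{L^2}\|\nabla u\|_{L^2}$, producing a contribution $\lesssim \varepsilon\,M(t)$ absorbed by smallness; the linear stress term is bounded by the second stage and integrates to an $O(1)$ contribution. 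This closes $M(t)\leq C$ and gives \eqref{decay}; the exponential bound on $g$ comes for free from the first stage.

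The main obstacle is exactly the linear $\operatorname{div}\tau$ term: a naive $L^1_x$ bound growing like $e^{\sqrt t}$ convolved against $(1+t-s)^{-d/4-1/2}$ wipes out the desired $(1+t)^{-d/4}$ rate. The entire argument hinges on converting the co-rotation exponential decay of $g$ into exponential decay of $\tau$ in $L^1_x$ through the Hardy-type step under the hypothesis $(p-1)k>1$, after which the standard spectral absorption argument closes.
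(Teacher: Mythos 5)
The overall three-stage architecture matches the paper's, but there is a genuine gap in your Stage 2 that breaks the closing step. You claim that the bound $\|\tau\|_{L^1_x}\lesssim e^{C\sqrt{t}}$, ``interpolated with the exponential $L^2_x(\mathcal{L}^2)$ decay from the first stage, in turn yields an exponentially decaying bound $\|\tau\|_{L^1_x\cap L^2_x}\lesssim e^{-\delta t}$.'' This is false: $L^1$ is the extreme endpoint, and Riesz--Thorin/H\"older interpolation between $L^1$ and $L^2$ only controls $\|\tau\|_{L^q}$ for $1<q\leq 2$; the $L^1$-norm itself is untouched. In fact $\|\tau\|_{L^1_x\cap L^2_x}\geq\|\tau\|_{L^1_x}$, so this quantity is still only bounded by $e^{C\sqrt{t}}$. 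With that, your Stage 3 integral
\begin{equation*}
\int_0^t (1+t-s)^{-\frac d4-\frac12}\,\|\tau(s)\|_{L^1}\,ds
\end{equation*}
does not give an $O(1)$ contribution: for $s$ near $t$ the kernel is $O(1)$ while $\|\tau\|_{L^1}\sim e^{C\sqrt{s}}$, so the integral is bounded below by a constant times $e^{C\sqrt{t}}$, destroying the $(1+t)^{-d/4}$ rate. The bootstrap does not close.

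The paper avoids exactly this obstacle by never using the $L^1_x$-norm of $\tau$ directly in the low-frequency Duhamel term. The extra factor $|\xi|$ from $\operatorname{div}$ is spent differently: one writes
\begin{equation*}
\Bigl(\int_{|\xi|\leq r_1}e^{-2\beta(t-s)|\xi|^2}\,|\xi|^2\,|\widehat{i(\rho)\tau}|^2\,d\xi\Bigr)^{1/2}
\lesssim (1+t-s)^{-\frac d4}\,\bigl\|\widehat{i(\rho)\tau}\bigr\|_{L^d_\xi}
\lesssim (1+t-s)^{-\frac d4}\,\|i(\rho)\tau\|_{L^{d/(d-1)}_x},
\end{equation*}
via H\"older in $\xi$ and Hausdorff--Young. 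Since $d/(d-1)\in(1,2)$ for $d\geq 3$, interpolation $\|\tau\|_{L^{d/(d-1)}}\lesssim\|\tau\|_{L^1}^{1-2/d}\,\|\tau\|_{L^2}^{2/d}$ now does help: the $\|\tau\|_{L^2}^{2/d}$ factor, controlled through the sharp Hardy-type estimate of Lemma~\ref{Lemma3}/Remark~\ref{rema} as $\|\tau\|_{L^2}\lesssim\|g\|^{1/2}_{L^2(\mathcal{L}^2)}\|\nabla_R g\|^{1/2}_{L^2(\mathcal{L}^2)}\lesssim e^{-Ct}\|\nabla_R g\|^{1/2}_{L^2(\mathcal{L}^2)}$, decays fast enough to swamp the $e^{C\sqrt{t}(1-2/d)}$ growth, leaving an integrable exponential. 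So the correction you need is: do not try to make $\|\tau\|_{L^1_x}$ decay; instead move the spare derivative into a H\"older step in $\xi$-space so that only $\|\tau\|_{L^{d/(d-1)}_x}$ enters, and use the refined pointwise-in-$x$ Hardy bound $|\tau|\lesssim\|g\|^{1/2}_{\mathcal{L}^2}\|\nabla_R g\|^{1/2}_{\mathcal{L}^2}$ rather than only the $\mathcal{L}^p$ version. A smaller point: the bootstrap quantity $M(t)$ should be taken over $\|U\|_{H^{s-1-\delta}}$ rather than $\|U\|_{L^2}$, since the $L^2$ and $L^\infty$ bounds on $F(U)$ used in the Duhamel estimate require decaying higher-order norms as well; this requires the companion estimate on $\||\xi|^{s-1-\delta}\widehat{U}\|_{L^2}$ carried out in the paper.
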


\begin{rema}
Taking $g \equiv 0$ and combining with the result in \cite{Li2011Large}, we can see that the $L^2$ decay rate obtained in Theorem \ref{th2} is optimal. Optimal decay rate for $(u,g)$ is an extension of \cite{Luo-Yin} to the compressible fluid.
\end{rema}

The paper is organized as follows. In Section 2 we introduce some notations and give some preliminaries which will be used in the sequel. In Section 3 we prove that the FENE dumbbell model admits a unique global strong solution provided the initial data are close to equilibrium state for $d\geq2$.
In Section 4 we study the $L^2$ decay of solutions to the compressible co-rotation FENE model for $d\geq3$ by using the linear spectral theory.

\section{Preliminaries}
  In this section we will introduce some notations and useful lemmas which will be used in the sequel.

 If the function spaces are over $\mathbb{R}^d$ and $B$ with respect to the variable $x$ and $R$, for simplicity, we drop $\mathbb{R}^d$ and $B$ in the notation of function spaces if there is no ambiguity.

For $p\geq1$, we denote by $\mathcal{L}^{p}$ the space
$$\mathcal{L}^{p}=\big\{f \big|\|f\|^{p}_{\mathcal{L}^{p}}=\int_{B} \psi_{\infty}|f|^{p}dR<\infty\big\}.$$
We will use the notation $L^{p}_{x}(\mathcal{L}^{q})$ to denote $L^{p}[\mathbb{R}^{d};\mathcal{L}^{q}]:$
$$L^{p}_{x}(\mathcal{L}^{q})=\big\{f \big|\|f\|_{L^{p}_{x}(\mathcal{L}^{q})}=(\int_{\mathbb{R}^{d}}(\int_{B} \psi_{\infty}|f|^{q}dR)^{\frac{p}{q}}dx)^{\frac{1}{p}}<\infty\big\}.$$

The symbol $\widehat{f}=\mathcal{F}(f)$ represents the Fourier transform of $f$.
Let $\Lambda^s f=\mathcal{F}^{-1}(|\xi|^s \widehat{f})$.
If $s\geq0$, we can denote by $H^{s}(\mathcal{L}^{2})$ the space
$$H^{s}(\mathcal{L}^{2})=\{f\big| \|f\|^2_{H^{s}(\mathcal{L}^{2})}=\int_{\mathbb{R}^{d}}\int_B(|f|^2+|\Lambda^s f|^2)\psi_\infty dRdx<\infty\}.$$
Then we introduce the energy and energy dissipation functionals for $(\rho,u,g)$ as follows:
$$E(t)=\|\rho\|^2_{H^{s}}+\|u\|^2_{H^{s}}+\|g\|^2_{H^{s}(\mathcal{L}^{2})},$$
and
$$D(t)=\|\nabla\rho\|^2_{H^{s-1}}+\mu\|\nabla u\|^2_{H^{s}}+(\mu+\mu')\|{\rm div}~u\|^2_{H^{s}}+\|\nabla_R g\|^2_{H^{s}(\mathcal{L}^{2})}.$$
Sometimes we write $f\lm g$ instead of $f\leq Cg$, where $C$ is a constant. We agree that $\nabla$ stands for $\nabla_x$ and ${\rm div}$ stands for ${\rm div}_x$.

The following lemma is on various Gagliardo-Nirenberg inequalities.
\begin{lemm}\cite{1959On}\label{Lemma0}
If $d=2,~p\in[2,+\infty)$, then there exists a constant $C$ such that
 $$\|f\|_{L^{p}}\leq C \|f\|^{\frac 2 p}_{L^{2}}\|\nabla f\|^{\frac {p-2} p}_{L^{2}}.$$
For $d\geq 3$, then there exists a constant $C$ such that
 $$\|f\|_{L^{p}}\leq C \|\nabla f\|_{L^{2}}$$
where $p=\frac {2d} {d-2}$.
\end{lemm}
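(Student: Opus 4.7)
The plan is to deduce both inequalities from the standard Sobolev embedding on $\mathbb{R}^d$, handling the $d=2$ case by a scaling (dilation) argument on top of a subcritical Sobolev estimate.

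For $d\geq 3$, the claimed bound $\|f\|_{L^{2d/(d-2)}}\leq C\|\nabla f\|_{L^2}$ is precisely the critical Sobolev embedding $\dot H^1(\mathbb{R}^d)\hookrightarrow L^{2d/(d-2)}(\mathbb{R}^d)$. I would simply invoke it; if a self-contained argument were needed, one route is to write $f = c_d\,(-\Delta)^{-1/2}(R_j \partial_j f)$ with Riesz transforms and apply the Hardy-Littlewood-Sobolev inequality, while another is the Federer-Fleming coarea approach via the isoperimetric inequality.

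For $d=2$ the exponent $p\in[2,\infty)$ is subcritical, so I would first invoke $H^1(\mathbb{R}^2)\hookrightarrow L^p(\mathbb{R}^2)$, yielding
\[
\|f\|_{L^p}\leq C\bigl(\|f\|_{L^2}+\|\nabla f\|_{L^2}\bigr),
\]
and then apply this inequality to the rescaled function $f_\lambda(x) = f(\lambda x)$. Computing the scalings
\[
\|f_\lambda\|_{L^p}=\lambda^{-2/p}\|f\|_{L^p},\qquad \|f_\lambda\|_{L^2}=\lambda^{-1}\|f\|_{L^2},\qquad \|\nabla f_\lambda\|_{L^2}=\|\nabla f\|_{L^2},
\]
I would optimise in $\lambda>0$ by choosing $\lambda=\|f\|_{L^2}/\|\nabla f\|_{L^2}$, which balances the two terms and yields the homogeneous bound $\|f\|_{L^p}\leq C\|f\|_{L^2}^{2/p}\|\nabla f\|_{L^2}^{(p-2)/p}$.

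Since this is a textbook Gagliardo-Nirenberg inequality, I do not expect a real obstacle: one need only verify that the rescaling is well defined (the inequality is trivial when $f=0$ or $\nabla f=0$ in $L^2$) and that the exponent $(p-2)/p$ is nonnegative, which is exactly the range $p\geq 2$ in the statement. In practice I would simply cite Nirenberg's original paper or a standard PDE text rather than writing out the Sobolev embedding proof in full.
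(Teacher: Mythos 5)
Your proof is correct. The paper itself states Lemma \ref{Lemma0} without proof, treating it as a standard Gagliardo--Nirenberg/Sobolev fact, and your derivation (critical Sobolev embedding for $d\geq 3$; $H^1(\mathbb{R}^2)\hookrightarrow L^p$ plus a dilation argument optimised over $\lambda$ for $d=2$) is a clean, standard way to obtain exactly the stated homogeneous inequalities.
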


The following lemmas allow us to estimate the extra stress tensor $\tau$.
\begin{lemm}\cite{Masmoudi2008}\label{Lemma1}
 If $\int_B g\psi_\infty dR=0$, then there exists a constant $C$ such that
 $$\|g\|_{\mathcal{L}^{2}}\leq C \|\nabla _{R} g\|_{\mathcal{L}^{2}}.$$
\end{lemm}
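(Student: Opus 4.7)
The inequality is a weighted Poincar\'e inequality for the probability measure $d\mu=\psi_\infty\,dR$ on $B$, and the natural plan is to deduce it from the spectral gap of the self-adjoint operator $\mathcal{L}g=-\psi_\infty^{-1}\nabla_R\cdot(\psi_\infty\nabla_R g)$ introduced in \eqref{eq1}. Since $\psi_\infty=Z^{-1}(1-|R|^2)^k$ vanishes on $\partial B$ for $k>0$, integration by parts against $\psi_\infty\,dR$ produces no boundary contribution and yields the Dirichlet form identity
\[
\int_B g(\mathcal{L}g)\,\psi_\infty\,dR=\int_B |\nabla_R g|^2\,\psi_\infty\,dR
\]
for all sufficiently regular $g$. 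Thus $\mathcal{L}$ is non-negative and self-adjoint on $\mathcal{L}^2$ with form domain the weighted Sobolev space $\mathcal{H}^1:=\{g\in\mathcal{L}^2:\nabla_R g\in\mathcal{L}^2\}$, and its kernel coincides with the constant functions.

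Next I would show that the embedding $\mathcal{H}^1\hookrightarrow\mathcal{L}^2$ is compact. On each inner ball $\{|R|\leq 1-\delta\}$ the weight $\psi_\infty$ is bounded away from $0$ and $\infty$, so classical Rellich--Kondrachov applies there; away from this region the polynomial degeneracy of $\psi_\infty$ suppresses mass accumulation near $\partial B$ provided one has a boundary Hardy-type control (the potential $\mathcal{U}=-k\log(1-|R|^2)$ blows up at $\partial B$, which morally plays the role of a confining potential). A cut-off plus diagonal extraction argument then yields compactness. Once compactness is in hand, the spectrum of $\mathcal{L}$ is discrete, consisting of eigenvalues $0=\lambda_0<\lambda_1\leq\lambda_2\leq\cdots\to+\infty$, with $\lambda_0$ simple and spanned by the constants. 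For $g$ subject to $\int_B g\,\psi_\infty\,dR=0$, namely $g\perp\ker\mathcal{L}$ in $\mathcal{L}^2$, the min--max principle gives
\[
\lambda_1\|g\|_{\mathcal{L}^2}^2\leq\int_B g(\mathcal{L}g)\psi_\infty\,dR=\|\nabla_R g\|_{\mathcal{L}^2}^2,
\]
which is precisely the stated inequality with $C=\lambda_1^{-1/2}$.

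The main obstacle is the compactness step, because the degeneracy of $\psi_\infty$ on $\partial B$ prevents one from invoking Rellich--Kondrachov directly on $B$. I would handle this either by the cut-off/Hardy strategy sketched above, or more cleanly by the ground-state substitution $u=g\sqrt{\psi_\infty}$: this conjugates $\mathcal{L}$ (up to lower order) to a Schr\"odinger operator $-\Delta+V$ on $B$ whose potential $V$ inherits a confining singularity of order $(1-|R|^2)^{-2}$ at $\partial B$, and such operators are well known to have compact resolvent and a strictly positive spectral gap above the ground state, which is exactly what the argument requires.
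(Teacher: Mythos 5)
The paper does not prove this lemma: it is taken verbatim from \cite{Masmoudi2008}, so there is no internal proof to compare against. Your overall plan (self-adjointness of $\mathcal{L}$, kernel equal to the constants, compactness of $\mathcal{H}^1\hookrightarrow\mathcal{L}^2$, spectral gap by min--max) is a standard and legitimate route to a weighted Poincar\'e inequality, and the algebra up to the compactness step is correct.

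However, the compactness step --- which you yourself identify as the crux --- contains a concrete error. After the ground-state substitution $u=g\sqrt{\psi_\infty}$ with $\psi_\infty\propto(1-|R|^2)^k$, one computes
\[
V=|\nabla_R\log\sqrt{\psi_\infty}|^2+\Delta_R\log\sqrt{\psi_\infty}
=\frac{k(k-2)\,|R|^2}{(1-|R|^2)^2}-\frac{kd}{1-|R|^2},
\]
so near $\partial B$ the leading behavior is $V\sim k(k-2)(1-|R|^2)^{-2}$. This tends to $+\infty$ only when $k>2$. For $0<k<2$ the coefficient $k(k-2)$ is \emph{negative}, so $V\to-\infty$ at the boundary; for $k=2$ the leading term vanishes and the subleading one still tends to $-\infty$. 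Thus the assertion that the conjugated operator ``inherits a confining singularity'' and is therefore ``well known to have compact resolvent'' simply fails in the parameter range $0<k\le 2$, which the model does allow (the hypothesis is only $k>0$). Self-adjointness and discreteness of the spectrum for a Schr\"odinger operator with an attractive inverse-square boundary singularity is a delicate matter governed by the sharp Hardy constant $1/4$: one needs $\frac{1}{4}+\frac{k(k-2)}{4}=\frac{(k-1)^2}{4}>0$, so the argument degenerates entirely at $k=1$, and even for $k\neq1$ with $k<2$ one must invoke the boundary Hardy inequality with the optimal constant (and an improved Hardy inequality at $k=1$) rather than a generic ``confining potential'' fact. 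Until that absorption is carried out explicitly --- and the borderline case $k=1$ handled separately --- the compactness step, and hence the spectral gap, is not established, so this is a genuine gap in the argument rather than a routine omission.
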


\begin{lemm}\label{Lemma2}
\cite{Masmoudi2008} For all $\delta>0$, there exists a constant $C_{\delta}$ such that
$$|\tau(g)|^2\leq\delta\|\nabla _{R}g\|^2_{\mathcal{L}^{2}}
+C_{\delta}\|g\|^2_{\mathcal{L}^{2}}.$$  \\
If $(p-1)k>1$, then
$$|\tau(g)|\leq C\|g\|_{\mathcal{L}^{p}}.$$
\end{lemm}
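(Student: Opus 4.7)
The plan is to exploit the identity $\nabla_R\psi_\infty=-\psi_\infty\nabla_R\mathcal{U}$, which lets us trade the singular factor $\nabla_R\mathcal{U}=\frac{2kR}{1-|R|^2}$ for a derivative on the admissible weight $\psi_\infty$. Since $\psi_\infty=C(1-|R|^2)^k$ vanishes on $\partial B$ for $k>0$, integration by parts in $R$ produces no boundary term, so that schematically
\[
\tau_{ij}(g)=\int_B R_i\,\nabla_{R_j}\mathcal{U}\,g\,\psi_\infty\,dR=-\int_B R_i g\,\partial_{R_j}\psi_\infty\,dR=\int_B\bigl(\delta_{ij}g+R_i\partial_{R_j}g\bigr)\psi_\infty\,dR.
\]
From this identity, Cauchy--Schwarz together with $|R|\le1$ immediately gives a crude bound $|\tau(g)|^2\lesssim \|g\|_{\mathcal{L}^2}^2+\|\nabla_R g\|_{\mathcal{L}^2}^2$, which already proves a non-sharp version of the first inequality.

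To upgrade to the stated estimate with the free small parameter $\delta$, the plan is to insert a cutoff $\chi_\epsilon(R)$ supported on $\{|R|\le 1-\epsilon\}$ and equal to $1$ on $\{|R|\le 1-2\epsilon\}$, and split $\tau_{ij}(g)=A_\epsilon+B_\epsilon$ with $A_\epsilon$ containing $\chi_\epsilon$ and $B_\epsilon$ containing $1-\chi_\epsilon$. On the support of $\chi_\epsilon$, the factor $\nabla_R\mathcal{U}$ is bounded by $C/\epsilon$, so direct Cauchy--Schwarz yields $|A_\epsilon|\le C_\epsilon\|g\|_{\mathcal{L}^2}$. For $B_\epsilon$, apply the integration by parts above with the cutoff $1-\chi_\epsilon$; the only term that involves $\nabla_R g$ is
\[
\int_B R_i(\partial_{R_j}g)(1-\chi_\epsilon)\psi_\infty\,dR,
\]
whose Cauchy--Schwarz bound carries a prefactor $\bigl(\int_{|R|\ge 1-2\epsilon}\psi_\infty\,dR\bigr)^{1/2}=\eta(\epsilon)$ with $\eta(\epsilon)\to0$ as $\epsilon\to0$. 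All remaining terms are controlled by $C_\epsilon\|g\|_{\mathcal{L}^2}$. Choosing $\epsilon$ so that $\eta(\epsilon)^2\le\delta$ gives the first inequality.

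For the second inequality, the singularity must be handled directly since no cancellation is available. Using $|R_i\nabla_{R_j}\mathcal{U}|\le\frac{C}{1-|R|^2}$ and H\"older with exponents $p$ and $q=p/(p-1)$,
\[
|\tau_{ij}(g)|\le C\int_B\frac{|g|}{1-|R|^2}\psi_\infty\,dR\le C\|g\|_{\mathcal{L}^p}\left(\int_B\frac{\psi_\infty}{(1-|R|^2)^q}\,dR\right)^{1/q}.
\]
The weight integral equals $C\int_B(1-|R|^2)^{k-q}\,dR$, which is finite precisely when $k-q>-1$, i.e.\ $q<k+1$, i.e.\ $(p-1)k>1$, matching the hypothesis.

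The main obstacle is the first estimate: a crude Cauchy--Schwarz only gives the inequality with a fixed constant in front of $\|\nabla_R g\|_{\mathcal{L}^2}^2$, whereas the proof of the a priori energy estimate later in the paper needs the coefficient of this dissipation term to be as small as we like. This forces the more delicate cutoff argument near $\partial B$ described above, whose point is to concentrate the gradient contribution on a shell of arbitrarily small $\psi_\infty$-measure.
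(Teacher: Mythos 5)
The paper cites this lemma from \cite{Masmoudi2008} without proof, so there is no in-paper argument to compare against; what you give is a self-contained proof, and it is essentially correct and matches the standard approach in the literature. The Hölder argument for the second inequality is exactly right: with $q=p/(p-1)$ the weight integral $\int_B(1-|R|^2)^{k-q}\,dR$ converges iff $q<k+1$, which is precisely $(p-1)k>1$. For the first inequality, the cutoff decomposition $\tau=A_\epsilon+B_\epsilon$ together with the identity $\nabla_R\psi_\infty=-\psi_\infty\nabla_R\mathcal{U}$ and an integration by parts on $B_\epsilon$ is the correct mechanism for producing a small coefficient in front of $\|\nabla_R g\|_{\mathcal{L}^2}$: after integrating by parts, the only term carrying $\nabla_R g$ is supported on $\{|R|\ge 1-2\epsilon\}$, so Cauchy--Schwarz gives the factor $\eta(\epsilon)=\bigl(\int_{|R|\ge 1-2\epsilon}\psi_\infty\,dR\bigr)^{1/2}\to 0$, while the $\partial_{R_j}\chi_\epsilon$ term and the $\delta_{ij}g$ term go into $C_\epsilon\|g\|_{\mathcal{L}^2}$. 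Two small points to tidy up: after squaring $|\tau|\le C_\epsilon\|g\|_{\mathcal{L}^2}+\eta(\epsilon)\|\nabla_R g\|_{\mathcal{L}^2}$ you pick up a constant (e.g.\ $2$) in front of $\eta(\epsilon)^2$, so one should choose $\epsilon$ with $2\eta(\epsilon)^2\le\delta$; and the integration by parts producing no boundary contribution should be justified by first working with $g$ smooth and compactly supported in $B$ and then passing to the limit by density, since membership in $\mathcal{L}^2$ alone does not force $g\psi_\infty$ to vanish pointwise on $\partial B$. Incidentally, once one has the paper's later Lemma~\ref{Lemma3} and the observation that $\|g\|_{\mathcal{L}^2}\le C\|\nabla_R g\|_{\mathcal{L}^2}$ from Lemma~\ref{Lemma1}, the first inequality also follows immediately from the interpolation bound $|\tau(g)|\lesssim\|g\|_{\mathcal{L}^2}^{1/2}\|\nabla_R g\|_{\mathcal{L}^2}^{1/2}$ plus Young's inequality, but your direct cutoff proof has the advantage of not requiring the mean-zero hypothesis $\int_B g\psi_\infty\,dR=0$.
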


To get $L^2$ decay rate, we prove a more precise estimate of the extra stress tensor $\tau$ by improving the method in \cite{Masmoudi2013}. Compared to the previous literature \cite{Masmoudi2008}, the results improve the estimate for $\tau$ with $0<k\leq 1$.
\begin{lemm}\label{Lemma3}
If $\|g\|_{\mathcal{L}^{2}}\leq C \|\nabla _{R} g\|_{\mathcal{L}^{2}}<\infty$, there exists a constant $C_{1}$ such that
\begin{align}\label{ha1}
|\tau(g)|\leq C_{1}\|g\|^{\frac {k+1} 2}_{\mathcal{L}^{2}}
\|\nabla _{R}g\|^{\frac {1-k} 2}_{\mathcal{L}^{2}},~~for~0<k<1,
\end{align}
and
\begin{align}\label{ha2}
|\tau(g)|\leq C_{1}\|g\|^{\frac {2n} {2n+1}}_{\mathcal{L}^{2}}
\|\nabla _{R}g\|^{\frac {1} {2n+1}}_{\mathcal{L}^{2}},~~for~k=1~and~\forall n\geq1.
\end{align}
\end{lemm}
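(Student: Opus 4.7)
The plan is to start from the pointwise bound
\begin{equation*}
|\tau(g)| \lm \int_B |R|^2(1-|R|^2)^{k-1}|g|\,dR,
\end{equation*}
which follows from $\nabla_R\mathcal{U}(R)=2kR/(1-|R|^2)$ together with $\psi_\infty(R)\propto (1-|R|^2)^k$. A plain Cauchy--Schwarz against $\|g\|_{\mathcal{L}^2}$ puts the weight $(1-|R|^2)^{k-2}$ against Lebesgue measure, which is non-integrable for $0<k\leq 1$; this is exactly what forces an interpolation with $\|\nabla_R g\|_{\mathcal{L}^2}$.

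The key device is a cutoff-and-integration-by-parts split. I would introduce a smooth radial cutoff $\chi_\ep(R)=\chi(1-|R|^2)$ with $\chi_\ep\equiv 1$ on $\{1-|R|^2\geq 2\ep\}$, $\chi_\ep\equiv 0$ on $\{1-|R|^2\leq \ep\}$, and $|\nabla_R\chi_\ep|\lm \ep^{-1}$ on the transition shell, then split $\tau(g)=\tau_1+\tau_2$ where $\tau_1$ carries $\chi_\ep$ and $\tau_2$ carries $1-\chi_\ep$. On $\tau_1$ a direct Cauchy--Schwarz is now legal and the residual weight integral $\int_{|R|^2\leq 1-\ep}(1-|R|^2)^{k-2}\,dR$ evaluates to $C\ep^{k-1}$ when $0<k<1$ and to $C\ln(1/\ep)$ when $k=1$. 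On $\tau_2$ I would use the identity $\nabla_R\mathcal{U}\,\psi_\infty=-\nabla_R\psi_\infty$ to integrate by parts; boundary terms vanish because $k>0$ makes $\psi_\infty$ vanish on $\partial B$ and $1-\chi_\ep$ is supported in the open ball. This yields three pieces, each controlled by the thin-shell volume bound $\int_{1-|R|^2\leq 2\ep}(1-|R|^2)^k\,dR\lm \ep^{k+1}$ together with $|\nabla_R\chi_\ep|\lm \ep^{-1}$ on its support. Assembling gives
\begin{equation*}
|\tau(g)|\lm \ep^{(k-1)/2}\|g\|_{\mathcal{L}^2}+\ep^{(k+1)/2}\|\nabla_R g\|_{\mathcal{L}^2}\qquad(0<k<1),
\end{equation*}
with the first factor replaced by $\sqrt{\ln(1/\ep)}$ in the case $k=1$.

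Finally I would optimize in $\ep$. For $0<k<1$ the balance $\ep=\|g\|_{\mathcal{L}^2}/\|\nabla_R g\|_{\mathcal{L}^2}$ is admissible since the Poincar\'e-type hypothesis keeps the ratio bounded above, and substituting it into either term of the displayed bound yields \eqref{ha1} in one line. For $k=1$ the logarithm is absorbed via $\sqrt{\ln(1/\ep)}\leq C_n\,\ep^{-1/(2n)}$ on $(0,1)$ for any fixed $n\geq 1$, so the inner exponent becomes $-1/(2n)$; re-balancing $\ep$ against the outer exponent $1$ then gives exactly \eqref{ha2}, with the free integer $n$ in the statement arising precisely from the choice $\delta=1/(2n)$ in this sub-polynomial absorption.

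The main obstacle I anticipate is the bookkeeping after IBP of $\tau_2$: each of the three resulting terms must genuinely sit in the shell $\{1-|R|^2\leq 2\ep\}$ so that the shell-volume exponent $(k+1)/2$ is uniformly available on all of them, and for $k=1$ one must track the logarithmic divergence in $\tau_1$ carefully to land on the exact exponents $2n/(2n+1)$ and $1/(2n+1)$ rather than something slightly worse.
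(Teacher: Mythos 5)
Your proof is correct and takes a genuinely different route from the paper's. The paper reduces the estimate to a one-dimensional inequality in $x=1-|R|$, then performs a change of variables adapted to the exponent ($y=x^{1-k}$ for $0<k<1$, $x=e^{-y}$ for $k=1$), splits the resulting integral at a scalar $A$, and integrates by parts on the piece near the boundary before optimizing $A$. Your version stays in the ball, replaces the split-at-$A$ by the cutoff $\chi_\ep$, and performs the integration by parts directly through the identity $\nabla_R\mathcal{U}\,\psi_\infty=-\nabla_R\psi_\infty$ (legitimate since $\psi_\infty|_{\partial B}=0$ for $k>0$), with the three resulting shell terms bounded by the volume estimate $\int_{\{1-|R|^2\le 2\ep\}}\psi_\infty\,dR\lm \ep^{k+1}$ and $|\nabla_R\chi_\ep|\lm\ep^{-1}$. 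The two arguments share the same skeleton -- Cauchy--Schwarz on the interior, a derivative term from IBP near the boundary, then optimize a scale -- and both produce the same two-term bound $\ep^{(k-1)/2}\|g\|_{\mathcal{L}^2}+\ep^{(k+1)/2}\|\nabla_Rg\|_{\mathcal{L}^2}$ (with $\ep^{(k-1)/2}$ replaced by $\sqrt{\ln(1/\ep)}$, then absorbed by $\ep^{-1/(2n)}$, when $k=1$). Your approach avoids the somewhat magical change of variables and makes the role of the boundary shell transparent in the original geometry; the paper's approach, once reduced to $1$-D, is perhaps more elementary to verify line by line. One small imprecision: the hypothesis $\|g\|_{\mathcal{L}^2}\le C\|\nabla_Rg\|_{\mathcal{L}^2}$ only gives the ratio $\le C$, not $\le 1$, so the naive balance $\ep=\|g\|_{\mathcal{L}^2}/\|\nabla_Rg\|_{\mathcal{L}^2}$ may leave the admissible range; you should take $\ep=\min\bigl(\|g\|_{\mathcal{L}^2}/\|\nabla_Rg\|_{\mathcal{L}^2},\,1/2\bigr)$, noting that in the capped regime the two norms are comparable so the interpolated right-hand side is comparable to $\|g\|_{\mathcal{L}^2}$. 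The paper handles the analogous issue by inserting the Poincar\'e constant into the choice of $A$.
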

\begin{rema}\label{rema}
According to Lemmas \ref{Lemma1}-\ref{Lemma3}, if $\int_B g\psi_\infty dR=0$, for any $k>0$, then we have $|\tau(g)|\leq C_{1}\|g\|^{\frac 1 2}_{\mathcal{L}^{2}}
\|\nabla _{R}g\|^{\frac 1 2}_{\mathcal{L}^{2}}$.
\end{rema}
\begin{proof}
We first prove the following Hardy type inequality:
\begin{align}\label{ha3}
|\tau(\psi)|=|\int_{B}(R\otimes\nabla_{R}\mathcal{U})\psi dR|\leq C_{1}(\int_B \frac {|\psi|^2} {\psi_\infty} dR)^{\frac {k+1} 4}
(\int_B \psi_\infty|\nabla_R \frac {\psi} {\psi_\infty}|^2 dR)^{\frac {1-k} 4},~~for~0<k<1.
\end{align}
Denote $x=1-|R|$. Then the proof is a simple consequence of the following 1-D inequality
\begin{align}\label{ha4}
|\int_{0}^{1} \frac {\psi} {x} dx|\leq C_{1}(\int_{0}^{1} \frac {\psi^2} {x^k} dx)^{\frac {k+1} 4}
(\int_{0}^{1} x^k|\big(\frac {\psi} {x^k}\big)'|^2 dx)^{\frac {1-k} 4},~~for~0<k<1.
\end{align}
Let $\alpha=\frac k {1-k}$. We make the following change of variables $y = x^{1-k}$, then we have $dy =(1-k)x^{-k}dx$.
We also denote $f(y) = \frac {\psi(x)} {x^k} $, $F=\int_{0}^{1}y^{2\alpha}f(y)^2 dy$ and $G=\int_{0}^{1}f'(y)^2 dy$. Hence
\begin{align*}
\int_{0}^{1} x^k|\big(\frac {\psi} {x^k}\big)'|^2 dx=(1-k)\int_{0}^{1}f'(y)^2 dy=(1-k)G.
\end{align*}
Moreover, we get
\begin{align*}
\int_{0}^{1} \frac {\psi} {x} dx=\frac 1 {1-k}\int_{0}^{1}y^{\alpha-1}f(y) dy~~and~~\int_{0}^{1} \frac {\psi^2} {x^k} dx=\frac 1 {1-k} F.
\end{align*}
For $A\in (0,1]$ which will be chosen later on, we have
\begin{align*}
\int_{0}^{1}y^{\alpha-1}f(y) dy=\int_{0}^{A}y^{\alpha-1}f(y) dy+\int_{A}^{1}y^{\alpha-1}f(y) dy.
\end{align*}
First of all, we deduce that
\begin{align*}
\int_{A}^{1}y^{\alpha-1}f(y) dy\leq (\int_{A}^{1}y^{2\alpha}f(y)^2 dy)^{\frac 1 2}(\int_{A}^{1}y^{-2}dy)^{\frac 1 2}\leq F^{\frac 1 2} A^{-\frac 1 2}.
\end{align*}
Since $G<\infty$, we obtain that $\lim_{y \rightarrow 0^{+}}y^{\alpha}f(y)=0$. Integrating by parts, we get
\begin{align*}
\int_{0}^{A}y^{\alpha-1}f(y) dy=\int_{0}^{A}\frac 1 {\alpha} (y^{\alpha})'f(y) dy=\frac 1 {\alpha} A^{\alpha}f(A)-\frac 1 {\alpha}\int_{0}^{A} y^{\alpha}f'(y) dy.
\end{align*}
Using the Cauchy-Schwarz inequality, we have
\begin{align*}
\int_{0}^{A} y^{\alpha}f'(y) dy\leq G^{\frac 1 2}(\int_{0}^{A} y^{2\alpha}dy)^{\frac 1 2}\leq CG^{\frac 1 2}A^{\alpha+\frac 1 2}.
\end{align*}
Moreover, we have
\begin{align*}
A^{\alpha}f(A)
&=A^{-1}\int_{0}^{A} (y^{\alpha+1}f(y))'dy  \\
&=A^{-1}\int_{0}^{A} (\alpha+1)y^{\alpha}f(y)dy+A^{-1}\int_{0}^{A} y^{\alpha+1}f'(y)dy  \\
&\leq CF^{\frac 1 2}A^{-\frac 1 2}+CG^{\frac 1 2}A^{\alpha+\frac 1 2}.
\end{align*}
Finally, we get
\begin{align*}
\int_{0}^{1}y^{\alpha-1}f(y) dy\leq CF^{\frac 1 2}A^{-\frac 1 2}+CG^{\frac 1 2}A^{\alpha+\frac 1 2}.
\end{align*}
Since $F\leq CG$, we complete the proof of \eqref{ha3} by choosing $A=(\frac F {CG})^{\frac {1-k} 2} \in (0,1]$. Let $\psi=g\psi_\infty$ in \eqref{ha3}. We thus get the Hardy type inequality \eqref{ha1}.  \\
The proof of \eqref{ha2} is a simple consequence of the following 1-D inequality
\begin{align}\label{ha5}
|\int_{0}^{1} \frac {\psi} {x} dx|\leq C_{1}(\int_{0}^{1} \frac {\psi^2} {x} dx)^{\frac {n} {2n+1}}
(\int_{0}^{1} x|\big(\frac {\psi} {x}\big)'|^2 dx)^{\frac {1} {4n+2}},~~for~k=1~and~\forall n\geq1.
\end{align}
We make the following change of variables $x = e^{-y}$ hence $dx =-e^{-y}dy$.
We also denote $f(y) = \frac {\psi(x)} {x} $, $F=\int_{0}^{\infty}e^{-2y}f(y)^2 dy$ and $G=\int_{0}^{\infty}f'(y)^2 dy$. Hence
\begin{align*}
\int_{0}^{1} x|\big(\frac {\psi} {x}\big)'|^2 dx=\int_{0}^{\infty}f'(y)^2 dy=G.
\end{align*}
Moreover, we get
\begin{align*}
\int_{0}^{1} \frac {\psi} {x} dx=\int_{0}^{\infty}e^{-y}f(y) dy~~and~~\int_{0}^{1} \frac {\psi^2} {x} dx= F.
\end{align*}
For $A\in [1,\infty)$ which will be chosen later on, we have
\begin{align*}
\int_{0}^{\infty}e^{-y}f(y) dy=\int_{0}^{A}e^{-y}f(y) dy+\int_{A}^{\infty}e^{-y}f(y) dy.
\end{align*}
First of all, we deduce that
\begin{align*}
\int_{0}^{A}e^{-y}f(y) dy\leq (\int_{0}^{A}e^{-2y}f(y)^2 dy)^{\frac 1 2}(\int_{0}^{A}dy)^{\frac 1 2}\leq F^{\frac 1 2} A^{\frac 1 2}.
\end{align*}
Since $G<\infty$, we obtain that $f(y)\leq C\sqrt{y}$. Integrating by parts, we get
\begin{align*}
\int_{A}^{\infty}e^{-y}f(y) dy=\int_{A}^{\infty}-(e^{-y})'f(y) dy=e^{-A}f(A)+\int_{A}^{\infty} e^{-y}f'(y) dy.
\end{align*}
Using the Cauchy-Schwarz inequality, we have
\begin{align*}
\int_{A}^{\infty} e^{-y}f'(y) dy\leq G^{\frac 1 2}(\int_{A}^{\infty} e^{-2y}dy)^{\frac 1 2}\leq CG^{\frac 1 2}e^{-A}.
\end{align*}
Moreover, we have
\begin{align*}
e^{-A}f(A)
&=e^{A}\int_{A}^{\infty} (e^{-2y}f(y))'dy  \\
&=-2e^{A}\int_{A}^{\infty} e^{-2y}f(y)dy+e^{A}\int_{A}^{\infty} e^{-2y}f'(y)dy  \\
&\leq CF^{\frac 1 2}+CG^{\frac 1 2}e^{-A}.
\end{align*}
Finally, we get
\begin{align*}
\int_{0}^{\infty}e^{-y}f(y) dy\leq CF^{\frac 1 2}A^{\frac 1 2}+CG^{\frac 1 2}e^{-A}\leq CF^{\frac 1 2}A^{\frac 1 2}+CG^{\frac 1 2}A^{-n}.
\end{align*}
Since $F\leq CG$, we complete the proof of \eqref{ha5} by choosing $A=(\frac {CG} F)^{\frac {1} {2n+1}} \in [1,\infty)$. We thus get the Hardy type inequality \eqref{ha2}.
\end{proof}
\begin{lemm}\cite{Moser1966A}\label{Lemma4}
For functions $f, g \in H^s\cap L^{\infty}$ and $s\geq 1$, we have
$$\|[\Lambda^s, f]g\|_{L^2}\leq C(\|\Lambda^{s}f\|_{L^2}\|g\|_{L^\infty}+\|\nabla f\|_{L^\infty}\|\Lambda^{s-1}g\|_{L^2}).$$
For functions $f\in H^s\cap L^{\infty}$, $g\in H^s(\mathcal{L}^{2})\cap L^{\infty}(\mathcal{L}^{2})$ and $s\geq1$, we have
$$\|[\Lambda^s, f]g\|_{L^2(\mathcal{L}^{2})}\leq C(\|\Lambda^{s}f\|_{L^2}\|g\|_{L^\infty(\mathcal{L}^{2})}+\|\nabla f\|_{L^\infty}\|\Lambda^{s-1}g\|_{L^2(\mathcal{L}^{2})}).$$
\end{lemm}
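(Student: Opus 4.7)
The statement is the classical Kato--Ponce/Moser commutator estimate, plus its $\mathcal{L}^2$-valued analogue. The plan is to prove the scalar version via a Littlewood--Paley paraproduct argument and then reduce the $\mathcal{L}^2$-valued version to it by a Minkowski-type interchange.

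For the scalar inequality, I would work on the Fourier side, writing
\[
\mathcal{F}\bigl([\Lambda^s,f]g\bigr)(\xi)=\int\bigl(|\xi|^s-|\eta|^s\bigr)\widehat{f}(\xi-\eta)\widehat{g}(\eta)\,d\eta,
\]
and use Bony's paraproduct decomposition $fg=T_fg+T_gf+R(f,g)$. The commutator splits accordingly into three dyadic regions. In the "high--low" region where the frequency of $f$ is much smaller than that of $g$ (roughly $|\xi-\eta|\ll|\eta|$), I would apply the mean value theorem to the symbol $|\xi|^s-|\eta|^s$ to extract a factor $|\xi-\eta|\,|\eta|^{s-1}$; combined with Bernstein's inequality this moves a gradient onto $f$ and produces the bound $\|\nabla f\|_{L^\infty}\|\Lambda^{s-1}g\|_{L^2}$. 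In the "low--high" region $|\xi-\eta|\gtrsim|\eta|$, the symbol is dominated by $|\xi-\eta|^s+|\eta|^s$ and Hölder yields $\|\Lambda^s f\|_{L^2}\|g\|_{L^\infty}$. The diagonal region is handled by either bound. A standard almost-orthogonality/Cauchy--Schwarz argument sums the dyadic pieces.

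For the second (vector-valued) inequality, I would exploit the fact that $\Lambda^s$ and $f$ act only in $x$, while the $\mathcal{L}^2$ norm integrates in $R$ against $\psi_\infty(R)\,dR$. Fixing $R\in B$ and applying the scalar estimate to $g(\cdot,R)$ gives
\[
\bigl\|[\Lambda^s,f]g(\cdot,R)\bigr\|_{L^2_x}\leq C\bigl(\|\Lambda^s f\|_{L^2}\|g(\cdot,R)\|_{L^\infty_x}+\|\nabla f\|_{L^\infty}\|\Lambda^{s-1}g(\cdot,R)\|_{L^2_x}\bigr).
\]
Squaring, multiplying by $\psi_\infty(R)$, integrating over $B$, and using Minkowski's inequality to swap the $L^2_x$ and $\mathcal{L}^2_R$ norms yields the desired bound, with $\|g\|_{L^\infty(\mathcal{L}^2)}$ and $\|\Lambda^{s-1}g\|_{L^2(\mathcal{L}^2)}$ on the right-hand side.

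The main obstacle is the delicate symbol bound on $|\xi|^s-|\eta|^s$ in the high--low region, especially when $s$ is non-integer; here one must combine the mean value theorem with Bernstein's inequality carefully so that the resulting dyadic sum converges and produces exactly $\|\nabla f\|_{L^\infty}$ rather than a higher derivative of $f$. Once the scalar case is in hand, the passage to the $\mathcal{L}^2$-valued version is essentially cost-free, because $\psi_\infty$ is independent of $x$ and $f$ is independent of $R$.
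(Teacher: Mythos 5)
The paper cites \cite{Moser1966A} and does not prove Lemma \ref{Lemma4} itself, so there is no internal proof to compare against; I will assess your argument on its own terms. Your sketch of the scalar Kato--Ponce estimate via Bony's paraproduct, the mean-value bound on $|\xi|^s-|\eta|^s$ in the high--low region, and almost-orthogonality to sum the dyadic pieces is the standard route and is sound.

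Your reduction of the $\mathcal{L}^2$-valued case to the scalar one, however, has a genuine gap: the Minkowski swap goes the wrong way. Applying the scalar estimate for each fixed $R$, squaring, multiplying by $\psi_\infty$, and integrating over $B$ gives, for the first term,
\begin{align*}
\|\Lambda^s f\|_{L^2}^2\int_B\|g(\cdot,R)\|_{L^\infty_x}^2\psi_\infty\,dR
=\|\Lambda^s f\|_{L^2}^2\,\|g\|_{\mathcal{L}^2_R(L^\infty_x)}^2 .
\end{align*}
Minkowski's inequality (since $2\le\infty$) gives $\|g\|_{L^\infty_x(\mathcal{L}^2_R)}\le\|g\|_{\mathcal{L}^2_R(L^\infty_x)}$, i.e.\ the quantity you obtain is \emph{larger} than the one in the lemma, and it need not even be finite under the stated hypothesis $g\in L^\infty(\mathcal{L}^2)$. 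So the pointwise-in-$R$ reduction produces a strictly weaker (possibly vacuous) estimate, not the claimed one.

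The fix is not to reduce to the scalar case but to rerun the Littlewood--Paley/paraproduct argument directly for $\mathcal{L}^2$-valued $g$. Since $\Lambda^s$, $f$, and the dyadic blocks $\Delta_j$ act only in $x$ and commute with integration in $R$, the operative Bernstein-type bounds upgrade verbatim to $\|\Delta_j g\|_{L^\infty_x(\mathcal{L}^2)}\lesssim\|g\|_{L^\infty_x(\mathcal{L}^2)}$ and $\|\Delta_j g\|_{L^2_x(\mathcal{L}^2)}\lesssim 2^{-j(s-1)}\|\Lambda^{s-1}g\|_{L^2_x(\mathcal{L}^2)}$, etc. Carrying these through your three-region decomposition yields the lemma with the correct norm $\|g\|_{L^\infty(\mathcal{L}^2)}$, because the $L^\infty$ in $x$ stays on the outside throughout, never crossing the $\mathcal{L}^2_R$ integration.
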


\section{Global strong solutions}
This section is devoted to investigating global strong solutions for the compressible co-rotation FENE dumbbell model with dimension $d\geq2$. To prove Theorem \ref{th1}, we divide it into two Propositions. Firstly, we give a key global priori estimate for local solutions in the following proposition.
\begin{prop}\label{pro1}
Let $d\geq 2~and~s>1+\frac d 2$. Let $(\rho_,u,g)\in L^{\infty}(0,T;H^s\times H^s\times H^s(\mathcal{L}^2))$  be local strong solutions constructed in Proposition \ref{pro2}. If $\sup_{t\in[0,T)} E(t)\leq \epsilon$, then there exist $C_0>1$ such that
\begin{align}
\sup_{t\in[0,T]} E(t)+\int_{0}^{T}D(t)dt\leq C_{0}E(0).
\end{align}
\end{prop}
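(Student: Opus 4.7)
The plan is a continuation argument: under the a priori smallness $\sup_{[0,T)}E(t)\le\epsilon$, I aim to derive an inequality of the form $\frac{d}{dt}\widetilde E(t)+c\,D(t)\le C\sqrt{E(t)}\,D(t)$ with $\widetilde E\sim E$, so that after absorbing the right-hand side and integrating in time one obtains $\sup E(t)+\int_0^T D\,dt\le C_0 E(0)$. The smallness of $\epsilon$ is used through the Sobolev embedding $H^s\hookrightarrow L^\infty$ (valid since $s>1+d/2$) to guarantee $1+\rho\ge 1/2$, which licenses the Taylor expansion of $(1+\rho)^{-1}$ and $P'(1+\rho)/(1+\rho)$ around $\rho=0$ and turns composite nonlinearities into $\rho$-analytic series.

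The core step is the $H^s$ energy estimate obtained by applying $\Lambda^s$ to each of the three equations of \eqref{eq1} and pairing respectively with $\Lambda^s\rho$, $\Lambda^s u$, and $\Lambda^s g$ weighted by $\psi_\infty$. Integration by parts yields the viscous dissipation $\mu\|\nabla\Lambda^s u\|_{L^2}^2+(\mu+\mu')\|\mathrm{div}\,\Lambda^s u\|_{L^2}^2$ from $\Sigma(u)$ and the Fokker--Planck dissipation $\|\nabla_R\Lambda^s g\|_{L^2(\mathcal{L}^2)}^2$ from $\mathcal{L}g$. The linear acoustic coupling between $\nabla\rho$ and $\mathrm{div}\,u$ cancels in the standard way. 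The decisive algebraic fact is the co-rotation cancellation: at principal order the $\sigma(u)$-drag contributes a multiple of $\int\mathrm{div}_R(\sigma(u)R\psi_\infty)\,g^2\,dR\,dx$ to the $g$-energy identity, which \emph{vanishes identically} by \eqref{co}; only commutator residues $[\Lambda^s,\sigma(u)R]$ survive. These, together with the commutators $[\Lambda^s,u\cdot\nabla]$ on $\rho,u,g$ and $[\Lambda^s,(1+\rho)^{-1}]\mathrm{div}\,\Sigma(u)$ (all controlled by Lemma \ref{Lemma4}), fit the $\sqrt{E}\,D$ schema. The extra-stress forcing $\int\Lambda^s u\cdot\Lambda^s\mathrm{div}\,\tau\,dx=-\int\nabla\Lambda^s u:\Lambda^s\tau\,dx$ is then tamed by Young's inequality plus Lemma \ref{Lemma2}: with small $\delta$, $\|\Lambda^s\tau\|_{L^2}^2$ is bounded by $\delta\|\nabla_R\Lambda^s g\|_{L^2(\mathcal{L}^2)}^2+C_\delta\|\Lambda^s g\|_{L^2(\mathcal{L}^2)}^2$; the first piece is absorbed into the $g$-dissipation and the second into $E$ via Lemma \ref{Lemma1}.

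The continuity equation carries no intrinsic dissipation for $\rho$, so $\|\nabla\rho\|_{H^{s-1}}^2$ must be recovered separately. Following the standard compressible Navier--Stokes trick, I would add a cross term $-\eta\sum_{|\alpha|\le s-1}\int\partial^\alpha u\cdot\partial^\alpha\nabla\rho\,dx$ to the energy, defining $\widetilde E$: differentiating in $t$ and substituting $u_t$ from the momentum equation and $\rho_t$ from the continuity equation, one extracts the positive quadratic $P'(1)\|\partial^\alpha\nabla\rho\|_{L^2}^2$ from the pressure-gradient term, while the residual terms are bounded by $C\|\nabla u\|_{H^s}^2+C\sqrt{E}\,D$ via Lemma \ref{Lemma4}. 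Choosing $\eta$ small preserves $\widetilde E\sim E$ and recovers the missing $\nabla\rho$-dissipation in $D(t)$. Summing everything and invoking $\sqrt{\epsilon}\ll 1$ absorbs the nonlinear right-hand side and yields the claim with an absolute constant $C_0$.

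The main difficulty I anticipate is the careful bookkeeping of the $\tau/\sigma(u)$ interplay at the $H^s$ level: \eqref{co} is an identity only at principal order, so each commutator remainder must genuinely scale as $\sqrt{E}\,D$ rather than produce an orphan $\|\Lambda^s\tau\|^2$ or $\|g\|^2_{L^2(\mathcal{L}^2)}$ term lacking intrinsic dissipation. Lemma \ref{Lemma1}, which upgrades $\|g\|_{\mathcal{L}^2}$ to $\|\nabla_R g\|_{\mathcal{L}^2}$ under the zero-mean condition $\int g\psi_\infty\,dR=0$, is essential here; preservation of that zero-mean condition in time (which follows by integrating the $g$-equation against $\psi_\infty$ over $B$ and using the boundary condition) must also be verified along the way.
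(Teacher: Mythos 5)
Your proposal follows the paper's argument closely: $H^s$ energy estimates with $\psi_\infty$-weighted pairings, the co-rotation cancellation \eqref{co} killing the principal $\sigma(u)$-drag contribution, Moser-type commutator bounds (Lemma \ref{Lemma4}) for the remainders, the Hardy/Poincar\'e control of $\tau$ (Lemmas \ref{Lemma1}--\ref{Lemma2}) absorbed by the Fokker--Planck dissipation, a cross term to recover $\|\nabla\rho\|^2_{H^{s-1}}$, and closure by $\sqrt{\epsilon}$-absorption. This is essentially the paper's route.

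Two points need tightening. First, the cross term should carry the \emph{positive} sign: pairing the momentum equation with $\nabla\Lambda^m\rho$ gives
$\frac{d}{dt}\int \Lambda^{m} u\cdot\nabla\Lambda^{m}\rho\,dx + \gamma\|\nabla\Lambda^{m}\rho\|^2_{L^2}=\cdots$,
so only $\widetilde E=E+2\eta\sum_{m\in\{0,s-1\}}\int\Lambda^{m}u\cdot\nabla\Lambda^{m}\rho\,dx$ (not your $-\eta\int\partial^\alpha u\cdot\partial^\alpha\nabla\rho\,dx$) puts $\eta\gamma\|\nabla\rho\|^2_{H^{s-1}}$ on the dissipative side; with your sign that quadratic becomes a source. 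Second, the Hardy/Poincar\'e bound yields $\|\Lambda^s\tau\|_{L^2}^2\lesssim\|\nabla_R\Lambda^s g\|^2_{L^2(\mathcal{L}^2)}$ with a \emph{fixed} constant, and $C_\delta\to\infty$ as $\delta\to 0$, so ``choosing $\delta$ small'' does not shrink the $C_\delta$ piece; moreover, ``absorbed into $E$'' is not a valid absorption since $E$ carries no sign. The paper's device, which your write-up omits, is to weight the $g$-part of both $E$ and $D$ by a free constant $\lambda$: after fixing $\delta$ small against the viscosity and $\eta$ small, one chooses $\lambda$ \emph{large} so that $\lambda\|\nabla_R g\|^2_{H^s(\mathcal{L}^2)}$ dominates all $C_\delta\|\nabla_R\Lambda^s g\|^2_{L^2(\mathcal{L}^2)}$ terms. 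Without this $\lambda$ the final absorption does not close.
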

\begin{proof}
Denote $L^2(\mathcal{L}^{2})$ inner product by $\langle f,g\rangle=\int_{\mathbb{R}^{d}}\int_{B}fg\psi_\infty dRdx$. Taking the $L^2(\mathcal{L}^{2})$ inner product with $g$ to $(\ref{eq1})_3$, then we have
\begin{align}
\frac {1} {2}\frac {d} {dt} \|g\|^2_{L^2(\mathcal{L}^{2})}+\|\nabla_R g\|^2_{L^2(\mathcal{L}^{2})}  =-\langle u\cdot\nabla g, g\rangle-\langle\frac 1 {\psi_\infty} \nabla_R\cdot(\sigma(u)Rg\psi_\infty), g \rangle.
\end{align}
Integrating by part, we get
\begin{align*}
-\langle u\cdot\nabla g,g\rangle=\frac 1 2 \langle {\rm div}~u, g^2\rangle
\lesssim \|\nabla u\|_{L^\infty}\|g\|^2_{L^2(\mathcal{L}^{2})}.
\end{align*}
Integrating by part and using (\ref{co}), we have
\begin{align*}
\langle\frac 1 {\psi_\infty} \nabla_R\cdot(\sigma(u)Rg\psi_\infty), g \rangle
=\frac 1 2 \int_{\mathbb{R}^{d}}\int_{B}{\rm div}_R(\sigma(u)R\psi_\infty)g^2 dRdx
=0,
\end{align*}
which implies that
\begin{align}\label{co1}
\frac {1} {2}\frac {d} {dt} \|g\|^2_{L^2(\mathcal{L}^{2})}+\|\nabla_R g\|^2_{L^2(\mathcal{L}^{2})}\lesssim \|\nabla u\|_{L^\infty}\|g\|^2_{L^2(\mathcal{L}^{2})}.
\end{align}
Multiplying $\psi_\infty$ to $(\ref{eq1})_3$ and integrating over $B$ with $R$, we deduce that $\int_{B}g\psi_\infty dR=\int_{B}g_0\psi_\infty dR=0$. Applying Lemma \ref{Lemma1}, we have $$\|g\|^2_{L^2(\mathcal{L}^{2})}\lesssim\|\nabla_R g\|^2_{L^2(\mathcal{L}^{2})}.$$

Let $h(\rho)=\frac {P'(1+\rho)} {1+\rho}$ and $i(\rho)=\frac 1 {\rho+1}$. Taking the $L^2$ inner product with $h(\rho) \rho$ to $(\ref{eq1})_1$,
then we have
\begin{align}
&\frac 1 2 \frac {d} {dt} \int_{\mathbb{R}^{d}}h(\rho)|\rho|^2 dx+\int_{\mathbb{R}^{d}}P'(1+\rho)\rho {\rm div}~u dx \\ \notag
&=\frac 1 2 \int_{\mathbb{R}^{d}} \partial_t h(\rho) |\rho|^2 dx-\int_{\mathbb{R}^{d}}h(\rho)\rho u\cdot\nabla \rho dx.
\end{align}
Taking the $L^2$ inner product with $(1+\rho)u$ to $(\ref{eq1})_2$, then we have
\begin{align}
&\frac 1 2 \frac {d} {dt} \int_{\mathbb{R}^{d}}(1+\rho)|u|^2 dx+\int_{\mathbb{R}^{d}}P'(1+\rho)u\nabla\rho dx-\int_{\mathbb{R}^{d}}u{\rm div}\Sigma(u)dx  \\ \notag
&=\int_{\mathbb{R}^{d}}u~{\rm div}~\tau dx+\frac 1 2 \int_{\mathbb{R}^{d}} \partial_t\rho|u|^2 dx-\int_{\mathbb{R}^{d}}u\cdot\nabla u (1+\rho)u dx.
\end{align}
Using integration by part, Lemmas \ref{Lemma1} and \ref{Lemma2}, we get
\begin{align*}
\int_{\mathbb{R}^{d}}u~{\rm div}~\tau dx=-\int_{\mathbb{R}^{d}}\nabla u\tau dx\leq \delta\|\nabla u\|^2_{L^2}+C_\delta\|\nabla_R g\|^2_{L^2(\mathcal{L}^{2})},
\end{align*}
where  $\delta$ is a sufficiently small constant.
Using integration by part, Lemma \ref{Lemma0}, we have
\begin{align*}
-\int_{\mathbb{R}^{d}}P'(1+\rho)(u\nabla\rho+\rho {\rm div}~u)dx
&=\int_{\mathbb{R}^{d}}P''(1+\rho)\rho u\nabla\rho dx \\
&\lesssim \|\nabla \rho\|_{L^2}\|\nabla u\|_{L^2}\|\rho\|_{L^d}+\|\nabla \rho\|^2_{L^2}\|u\|_{L^d}.
\end{align*}
The remaining terms can be treated as follows.

If $d\geq 3$, using Lemma \ref{Lemma0}, we deduce that
\begin{align*}
\frac 1 2 \int_{\mathbb{R}^{d}} \partial_t h(\rho) |\rho|^2 dx
&\lesssim \|\nabla\rho\|_{L^2}\|u\|_{L^{\frac {2d} {d-2}}}\|\rho\|_{L^d}+\|\nabla u\|_{L^2}\|\rho\|_{L^{\frac {2d} {d-2}}}\|\rho\|_{L^d}(1+\|\rho\|_{L^\infty})  \\
&\lesssim \|\nabla u\|_{L^2}\|\nabla\rho\|_{L^2}\|\rho\|_{L^d},
\end{align*}
\begin{align*}
\int_{\mathbb{R}^{d}}h(\rho)\rho u\cdot\nabla \rho dx
\lesssim \|\nabla\rho\|_{L^2}\|u\|_{L^{\frac {2d} {d-2}}}\|\rho\|_{L^d}  \lesssim \|\nabla u\|_{L^2}\|\nabla\rho\|_{L^2}\|\rho\|_{L^d},
\end{align*}
\begin{align*}
\frac 1 2 \int_{\mathbb{R}^{d}} \partial_t\rho|u|^2 dx
&\lesssim \|\nabla\rho\|_{L^2}\|u\|_{L^{\frac {2d} {d-2}}}\|u\|_{L^d}+\|\nabla u\|_{L^2}\|u\|_{L^{\frac {2d} {d-2}}}\|u\|_{L^d}(1+\|\rho\|_{L^\infty})  \\
&\lesssim \|\nabla \rho\|_{L^2}\|\nabla u\|_{L^2}\|u\|_{L^d}+\|\nabla u\|^2_{L^2}\|u\|_{L^d},
\end{align*}
\begin{align*}
\int_{\mathbb{R}^{d}}u\cdot\nabla u (1+\rho)u dx
\lesssim \|\nabla u\|_{L^2}\|u\|_{L^{\frac {2d} {d-2}}}\|u\|_{L^d}  \lesssim \|\nabla u\|^2_{L^2}\|u\|_{L^d}.
\end{align*}

If $d=2$, we verify that
\begin{align*}
\frac 1 2 \int_{\mathbb{R}^{d}} \partial_t h(\rho) |\rho|^2 dx
&\lesssim \|\nabla\rho\|_{L^2}\|u\|_{L^{4}}\|\rho\|_{L^4}+\|\nabla u\|_{L^2}\|\rho\|^2_{L^4}  \\
&\lesssim \|\nabla\rho\|_{L^2}(\|\nabla \rho\|_{L^2}\|u\|_{L^2}+\|\nabla u\|_{L^2}\|\rho\|_{L^2}),
\end{align*}
\begin{align*}
\int_{\mathbb{R}^{d}}h(\rho)\rho u\cdot\nabla \rho dx
\lesssim \|\nabla\rho\|_{L^2}\|u\|_{L^{4}}\|\rho\|_{L^4}  \lesssim \|\nabla\rho\|_{L^2}(\|\nabla u\|_{L^2}\|\rho\|_{L^2}+\|\nabla \rho\|_{L^2}\|u\|_{L^2}),
\end{align*}
\begin{align*}
\frac 1 2 \int_{\mathbb{R}^{d}} \partial_t\rho|u|^2 dx+\int_{\mathbb{R}^{d}}u\cdot\nabla u (1+\rho)u dx
&\lesssim \|\nabla\rho\|_{L^2}\|u\|^2_{L^{4}}+\|\nabla u\|_{L^2}\|u\|^2_{L^4}  \\
&\lesssim \|\nabla\rho\|_{L^2}\|\nabla u\|_{L^2}\|u\|_{L^2}+\|\nabla u\|^2_{L^2}\|u\|_{L^2}.
\end{align*}

Taking the $L^2$ inner product with $\nabla\rho$ to $(\ref{eq1})_2$, then we have
\begin{align}
&\frac {d} {dt} \int_{\mathbb{R}^{d}} u\nabla\rho dx+\gamma\|\nabla\rho\|^2_{L^2}= \int_{\mathbb{R}^{d}}u\nabla\rho_t dx  \\ \notag
&+\int_{\mathbb{R}^{d}}\nabla\rho\cdot[i(\rho) {\rm div}\Sigma{(u)}-(h(\rho)-\gamma) \nabla\rho-u\cdot\nabla u+i(\rho) {\rm div}~\tau] dx  \\ \notag
&=I_1+I_2.
\end{align}
By integration by part, we have
\begin{align*}
I_1=-\int_{\mathbb{R}^{d}}{\rm div}~u\rho_t dx
\lesssim \|\nabla u\|^2_{L^2}(1+\|\rho\|_{L^\infty})+\|\nabla u\|_{L^2}\|\nabla \rho\|_{L^2}\|u\|_{L^\infty}.
\end{align*}
Applying Lemmas \ref{Lemma1} and \ref{Lemma2}, we deduce that
\begin{align*}
I_2\lesssim \|\nabla \rho\|_{L^2}(\|\nabla^2 u\|_{L^2}+\|\rho\|_{L^\infty}\|\nabla \rho\|_{L^2}+\|u\|_{L^\infty}\|\nabla u\|_{L^2}+\|\nabla\nabla_R g\|_{L^2(\mathcal{L}^{2})}).
\end{align*}

Let $\lambda$ be a sufficiently large constant and $\eta<1$, combining all the lower order estimates for $(\ref{eq1})$, we deduce that
\begin{align}\label{low estimate}
&\frac {d} {dt} (\|h(\rho)^{\frac 1 2}\rho \|^2_{L^2}+\|(1+\rho)^{\frac 1 2}u\|^2_{L^2}+\lambda\|g\|^2_{L^2(\mathcal{L}^{2})}+2\eta\int_{\mathbb{R}^{d}} u\nabla\rho dx)  \\ \notag
&+2(\mu\|\nabla u\|^2_{L^2}+(\mu+\mu')\|{\rm div}~u\|^2_{L^2}+
\eta\gamma\|\nabla\rho\|^2_{L^2}+\lambda\|\nabla_R g\|^2_{L^2(\mathcal{L}^{2})})  \\ \notag
&\lesssim \|\nabla \rho\|_{L^2}\|\nabla u\|_{L^2}\|\rho\|_{L^d}+(\|\nabla \rho\|^2_{L^2}+\|\nabla u\|^2_{L^2})\|u\|_{L^d}+\lambda\|\nabla u\|_{L^\infty}\|\nabla_R g\|^2_{L^2(\mathcal{L}^{2})}  \\ \notag
&+\eta\|\nabla u\|^2_{L^2}(1+\|\rho\|_{L^\infty})+\delta\|\nabla u\|^2_{L^2}+C_\delta\|\nabla_R g\|^2_{L^2(\mathcal{L}^{2})}  \\ \notag
&+\eta\|\nabla \rho\|_{L^2}(\|\nabla^2 u\|_{L^2}+\|\rho\|_{L^\infty}\|\nabla \rho\|_{L^2}+\|u\|_{L^\infty}\|\nabla u\|_{L^2}+\|\nabla_R g\|_{L^2(\mathcal{L}^{2})}).
\end{align}

Now we turn to deal with the high order estimates. Applying $\Lambda^s$ to $(\ref{eq1})_3$, we infer that
\begin{align}\label{h3}
\partial_t\Lambda^s g+\mathcal{L}\Lambda^s g  =-u\cdot\nabla\Lambda^s g-[\Lambda^s,u]\nabla g-\frac 1 {\psi_\infty}\nabla_R \cdot(\Lambda^s\sigma(u)Rg\psi_\infty+R\psi_\infty[\Lambda^s,g]\sigma(u)).
\end{align}
Taking the $L^2(\mathcal{L}^{2})$ inner product with $\Lambda^s g $ to $(\ref{h3})$, then we have
\begin{align}
\frac {1} {2}\frac {d} {dt} \|\Lambda^s g\|^2_{L^2(\mathcal{L}^{2})}+\|\nabla_R  \Lambda^s g\|^2_{L^2(\mathcal{L}^{2})}
&=-\langle u\cdot\nabla \Lambda^s g,\Lambda^s g\rangle  -\langle[\Lambda^s,u]\nabla g,\Lambda^s g\rangle
\\ \notag
&-\langle\frac 1 {\psi_\infty} \nabla_R\cdot(\Lambda^s\sigma(u)Rg\psi_\infty),\Lambda^sg\rangle
\\ \notag
&-\langle\frac 1 {\psi_\infty} \nabla_R\cdot(R\psi_\infty[\Lambda^s,g]\sigma(u)),\Lambda^s g\rangle.
\end{align}
Integrating by parts and using Lemma \ref{Lemma4}, we have
\begin{align*}
-\langle u\cdot\nabla \Lambda^s g,\Lambda^s g\rangle=\frac 1 2 \langle {\rm div}~u, (\Lambda^s g)^2\rangle
\lesssim \|\nabla u\|_{L^\infty}\|g\|^2_{H^s(\mathcal{L}^{2})},
\end{align*}
and
\begin{align*}
-\langle[\Lambda^s,u]\nabla g,\Lambda^s g\rangle\lesssim \|u\|_{H^s}\|g\|^2_{H^{s}(\mathcal{L}^{2})}.
\end{align*}
Similarly, we have
\begin{align*}
\langle\frac 1 {\psi_\infty} \nabla_R\cdot(\Lambda^s\sigma(u)Rg\psi_\infty), \Lambda^sg \rangle
&=-\int_{\mathbb{R}^{d}}\int_{B}(\Lambda^s\sigma(u)R\psi_\infty g)\nabla_R \Lambda^s g dRdx \\
&\lesssim \|g\|_{L^\infty(\mathcal{L}^{2})}\|\nabla u\|_{H^s}\|\nabla_R g\|_{H^s(\mathcal{L}^{2})},
\end{align*}
and
\begin{align*}
-\langle\frac 1 {\psi_\infty} \nabla_R\cdot(R\psi_\infty[\Lambda^s,g]\sigma(u)),\Lambda^s g\rangle
&=\langle R[\Lambda^s,g]\sigma(u),\nabla_R\Lambda^s g\rangle  \\
&\lesssim \|\nabla_R  g\|_{H^s(\mathcal{L}^{2})}\|u\|_{H^s}\| g\|_{H^{s}(\mathcal{L}^{2})}.
\end{align*}
Applying Lemma \ref{Lemma1}, we deduce that
\begin{align}\label{co2}
\frac {1} {2}\frac {d} {dt} \|\Lambda^s g\|^2_{L^2(\mathcal{L}^{2})}+\|\nabla_R \Lambda^s g\|^2_{L^2(\mathcal{L}^{2})}\lesssim \|g\|_{L^\infty(\mathcal{L}^{2})}\|\nabla u\|_{H^s}\|\nabla_R g\|_{H^s(\mathcal{L}^{2})}+\|u\|_{H^s}\|\nabla_R g\|^2_{H^{s}(\mathcal{L}^{2})}.
\end{align}
Applying $\Lambda^s$ to $(\ref{eq1})_1$, we infer that
\begin{align}\label{h1}
\partial_t\Lambda^s \rho+{\rm div}~\Lambda^s u(1+\rho)
=-u\cdot\nabla\Lambda^s \rho-[\Lambda^s,u]\nabla\rho-[\Lambda^s,\rho]{\rm div}~u.
\end{align}
Taking the $L^2$ inner product with $h(\rho) \Lambda^s \rho$ to $(\ref{h1})$, then we have
\begin{align}
&\frac 1 2 \frac {d} {dt} \int_{\mathbb{R}^{d}}h(\rho)|\Lambda^s \rho|^2 dx+\int_{\mathbb{R}^{d}}P'(1+\rho)\Lambda^s \rho {\rm div}~\Lambda^s u dx =\frac 1 2
\int_{\mathbb{R}^{d}} \partial_t h(\rho) |\Lambda^s \rho|^2 dx  \\ \notag
&-\int_{\mathbb{R}^{d}}\Lambda^s \rho \cdot h(\rho)  u\cdot\nabla \Lambda^s \rho dx-\int_{\mathbb{R}^{d}}[\Lambda^s,u]\nabla\rho\cdot h(\rho) \Lambda^s\rho dx-\int_{\mathbb{R}^{d}}[\Lambda^s,(1+\rho)]{\rm div}~u\cdot h(\rho) \Lambda^s \rho dx.
\end{align}
Firstly, we obtain
\begin{align*}
\frac 1 2
\int_{\mathbb{R}^{d}} \partial_t h(\rho) |\Lambda^s \rho|^2 dx
\lesssim (\|u\|_{H^s}+\|\rho\|_{H^s})\|\Lambda^s \rho\|^2_{L^2}.
\end{align*}
By integration by part, we have
\begin{align*}
-\int_{\mathbb{R}^{d}}\Lambda^s \rho \cdot h(\rho)  u\cdot\nabla \Lambda^s \rho dx
=\frac 1 2 \int_{\mathbb{R}^{d}} {\rm div}(h(\rho)u)|\Lambda^s \rho|^2 dx
\lesssim (\|u\|_{H^s}+\|\rho\|_{H^s})\|\Lambda^s \rho\|^2_{L^2}.
\end{align*}
By the Moser-type inequality in Lemma \ref{Lemma4}, we obtain
\begin{align*}
&-\int_{\mathbb{R}^{d}}[\Lambda^s,u]\nabla\rho\cdot h(\rho) \Lambda^s\rho dx-\int_{\mathbb{R}^{d}}[\Lambda^s,\rho]{\rm div}~u\cdot h(\rho) \Lambda^s \rho dx  \\
&\lesssim \|\nabla u\|_{H^{s-1}}\|\rho\|_{H^s}\|\Lambda^s \rho\|_{L^2}.
\end{align*}
Applying $\Lambda^m$ to $(\ref{eq1})_2$, we infer that
\begin{align}\label{h2}
&\partial_t\Lambda^m u+h(\rho)\nabla\Lambda^m\rho-i(\rho) {\rm div}~\Lambda^m \Sigma{(u)}-i(\rho) {\rm div}~\Lambda^m \tau  \\ \notag
&=-u\cdot\nabla\Lambda^m u-[\Lambda^m,u]\nabla u-[\Lambda^m,h(\rho)   -\gamma]\nabla\rho\\ \notag
&+[\Lambda^m,i(\rho)-1]{\rm div}\Sigma{(u)}+[\Lambda^m,i(\rho)-1]{\rm div}~\tau.
\end{align}
Taking the $L^2$ inner product with $(1+\rho) \Lambda^s u$ to $(\ref{h2})$ with $m=s$, then we have
\begin{align}
&\frac 1 2 \frac {d} {dt} \|(1+\rho)^{\frac 1 2}\Lambda^s u\|^2_{L^2}+\int_{\mathbb{R}^{d}}P'(1+\rho)\nabla\Lambda^s \rho \Lambda^s u dx
+\mu\|\nabla\Lambda^s u\|^2_{L^2}+(\mu+\mu')\|{\rm div}~\Lambda^s u\|^2_{L^2}  \\ \notag
&=\int_{\mathbb{R}^{d}}{\rm div}~\Lambda^s \tau \Lambda^s u dx+\frac 1 2 \int_{\mathbb{R}^{d}} \partial_t\rho |\Lambda^s u|^2 dx
-\int_{\mathbb{R}^{d}}\Lambda^s u \cdot(1+\rho)u\cdot\nabla \Lambda^s u dx
\\ \notag
&-\int_{\mathbb{R}^{d}}[\Lambda^s,u]\nabla u (1+\rho)\Lambda^s u dx  -\int_{\mathbb{R}^{d}}[\Lambda^s,h(\rho)-\gamma]\nabla\rho (1+\rho)\Lambda^s u dx
\\ \notag
&+\int_{\mathbb{R}^{d}}[\Lambda^s,i(\rho)-1]{\rm div}\Sigma{(u)} (1+\rho)\Lambda^s u dx  +\int_{\mathbb{R}^{d}}[\Lambda^s,i(\rho)-1]{\rm div}~\tau (1+\rho)\Lambda^s u dx.
\end{align}
Integrating by part and using Lemmas \ref{Lemma1} and \ref{Lemma2}, we get
\begin{align*}
\int_{\mathbb{R}^{d}}{\rm div}~\Lambda^s \tau \Lambda^s u dx=-\int_{\mathbb{R}^{d}}\nabla\Lambda^s u\Lambda^s\tau dx\lesssim \delta\|\nabla\Lambda^s u\|^2_{L^2}+C_\delta\|\nabla_R \Lambda^s g\|^2_{L^2(\mathcal{L}^{2})},
\end{align*}
where  $\delta$ is a sufficiently small constant.
Using Lemma \ref{Lemma4}, we obtain
\begin{align*}
&\frac 1 2 \int_{\mathbb{R}^{d}} \partial_t\rho |\Lambda^s u|^2 dx
-\int_{\mathbb{R}^{d}}\Lambda^s u \cdot(1+\rho)u\cdot\nabla \Lambda^s u dx
-\int_{\mathbb{R}^{d}}[\Lambda^s,u]\nabla u (1+\rho)\Lambda^s u dx  \\ \notag
&\lesssim \|u\|_{H^s}\|\Lambda^s u\|_{L^2}\|\nabla u\|_{H^s},
\end{align*}
and
\begin{align*}
&-\int_{\mathbb{R}^{d}}[\Lambda^s,h(\rho)-\gamma]\nabla\rho (1+\rho)\Lambda^s u dx
+\int_{\mathbb{R}^{d}}[\Lambda^s,i(\rho)-1]{\rm div}\Sigma{(u)} (1+\rho)\Lambda^s u dx  \\ \notag
&\lesssim\|\rho\|_{H^s}\|\Lambda^s u\|_{L^2}(\|\nabla \rho\|_{H^{s-1}}+\|\nabla u\|_{H^{s}}).
\end{align*}
Using Lemmas \ref{Lemma1}, \ref{Lemma2} and \ref{Lemma4}, we obtain
\begin{align*}
\int_{\mathbb{R}^{d}}[\Lambda^s,i(\rho)-1]{\rm div}~\tau (1+\rho)\Lambda^s u dx\lesssim\|\rho\|_{H^s}\|\Lambda^s u\|_{L^2}\|\nabla_R g\|_{H^{s}(\mathcal{L}^{2})}.
\end{align*}
Integrating by part, we get
\begin{align*}
-\int_{\mathbb{R}^{d}}P'(1+\rho)(\Lambda^s u\nabla\Lambda^s\rho+\Lambda^s\rho {\rm div}~\Lambda^su)dx
&=\int_{\mathbb{R}^{d}}P''(1+\rho)\Lambda^s\rho \Lambda^s u\nabla\rho dx\\ \notag
&\lesssim \|\rho\|_{H^s}\|\Lambda^s u\|_{L^2}\|\Lambda^s \rho\|_{L^2}.
\end{align*}
Taking the $L^2$ inner product with $\nabla\Lambda^{s-1} \rho$ to $(\ref{h2})$ with $m=s-1$, then we have
\begin{align}
&\frac {d} {dt} \int_{\mathbb{R}^{d}}\Lambda^{s-1} u \cdot\nabla\Lambda^{s-1} \rho dx
+\gamma\|\nabla\Lambda^{s-1}\rho\|^2_{L^{2}}
=-\int_{\mathbb{R}^{d}} \Lambda^{s-1} \rho_t {\rm div}~\Lambda^{s-1} u dx \\ \notag
&-\int_{\mathbb{R}^{d}}\nabla\Lambda^{s-1} \rho\cdot \Lambda^{s-1}(u\cdot\nabla u) dx
-\int_{\mathbb{R}^{d}}\Lambda^{s-1}((h(\rho)-\gamma)\nabla\rho) \nabla\Lambda^{s-1} \rho dx \\ \notag
&+\int_{\mathbb{R}^{d}}\Lambda^{s-1}(i(\rho){\rm div}\Sigma{(u)}) \nabla\Lambda^{s-1} \rho dx
+\int_{\mathbb{R}^{d}}\Lambda^{s-1}(i(\rho){\rm div}~\tau) \nabla\Lambda^{s-1} \rho dx.
\end{align}
We can deduce that
\begin{align*}
&-\int_{\mathbb{R}^{d}} \Lambda^{s-1} \rho_t {\rm div}~\Lambda^{s-1} u dx
-\int_{\mathbb{R}^{d}}\nabla\Lambda^{s-1} \rho\cdot \Lambda^{s-1}(u\cdot\nabla u) dx \\
&\lesssim \|u\|_{H^s}\|\nabla u\|_{H^{s-1}}\|\nabla \rho\|_{H^{s-1}}+\|\nabla u\|^2_{H^{s-1}},
\end{align*}
and
\begin{align*}
&-\int_{\mathbb{R}^{d}}\Lambda^{s-1}((h(\rho)-\gamma)\nabla\rho) \nabla\Lambda^{s-1} \rho dx +\int_{\mathbb{R}^{d}}\Lambda^{s-1}(i(\rho){\rm div}~\Sigma{(u)}) \nabla\Lambda^{s-1} \rho dx\\
&\lesssim \|\nabla \rho\|_{H^{s-1}}(\|\nabla\rho\|_{H^{s-1}}\|\rho\|_{H^{s-1}}+\|\nabla u\|_{H^{s}}+\|\nabla u\|_{H^{s}}\|\rho\|_{H^{s-1}}).
\end{align*}
Using Lemmas \ref{Lemma1} and \ref{Lemma2}, we have
\begin{align*}
\int_{\mathbb{R}^{d}}\Lambda^{s-1}(i(\rho){\rm div}~\tau) \nabla\Lambda^{s-1} \rho dx\lesssim \|\nabla \rho\|_{H^{s-1}}\|\nabla_R  g\|_{H^{s-1}(\mathcal{L}^{2})}(\|\rho\|_{H^{s-1}}+1).
\end{align*}
Combining all the higher order derivatives estimates for $(\ref{eq1})$, we deduce that
\begin{align}\label{high estimate}
&\frac {d} {dt} (\|h(\rho)^{\frac 1 2}\Lambda^s\rho \|^2_{L^2}+\|(1+\rho)^{\frac 1 2}\Lambda^s u\|^2_{L^2}+\lambda\|\Lambda^s g\|^2_{L^2(\mathcal{L}^{2})}+2\eta\int_{\mathbb{R}^{d}} \Lambda^{s-1} u\nabla\Lambda^{s-1} \rho dx)  \\ \notag
&+2(\mu\|\nabla \Lambda^s u\|^2_{L^2}+(\mu+\mu')\|{\rm div}~\Lambda^s u\|^2_{L^2}+
\eta\gamma\|\nabla\Lambda^{s-1}\rho\|^2_{L^2}+\lambda\|\nabla_R \Lambda^s g\|^2_{L^2(\mathcal{L}^{2})})  \\ \notag
&\lesssim \|\rho\|_{H^s}\|\nabla u\|_{H^s}\|\nabla_R g\|_{H^{s}(\mathcal{L}^{2})}+(\|\rho\|_{H^{s}}+\|u\|_{H^{s}})(\|\nabla\rho\|^2_{H^{s-1}}+\|\nabla u\|^2_{H^{s}})  \\ \notag
&+\lambda(\|g\|_{L^\infty(\mathcal{L}^{2})}\|\nabla u\|_{H^s}\|\nabla_R g\|_{H^s(\mathcal{L}^{2})}+\|u\|_{H^s}\|\nabla_R g\|^2_{H^{s}(\mathcal{L}^{2})}) \\ \notag
&+\eta(\|\nabla u\|^2_{H^s}+\|\nabla u\|_{H^s}\|\nabla \rho\|_{H^{s-1}})+\delta\|\nabla \Lambda^s u\|^2_{L^2}+C_\delta\|\nabla_R \Lambda^s g\|^2_{L^2(\mathcal{L}^{2})}  \\ \notag
&+\eta\|\nabla \rho\|_{H^{s-1}}\|\nabla_R g\|_{H^s(\mathcal{L}^{2})}(1+\| \rho\|_{H^{s}}).
\end{align}
Denote that
$$E_\eta(t)=\sum_{n=0,s}(\|h(\rho)^{\frac 1 2}\Lambda^n\rho\|^2_{L^{2}}+\|(1+\rho)^{\frac 1 2}\Lambda^nu\|^2_{L^{2}})+\lambda\|g\|^2_{H^{s}(\mathcal{L}^{2})}+2\eta\sum_{m=0,s-1}\int_{\mathbb{R}^{d}} \Lambda^m u\nabla\Lambda^m \rho dx,$$
and
$$D_\eta(t)=\eta\gamma\|\nabla\rho\|^2_{H^{s-1}}+\mu\|\nabla u\|^2_{H^{s}}+(\mu+\mu')\|{\rm div}~u\|^2_{H^{s}}+\lambda\|\nabla_R g\|^2_{H^{s}(\mathcal{L}^{2})}.$$
For some sufficiently small constant $\eta>0$, we obtain $E(t)\sim E_\eta(t)$ and $D(t)\sim D_\eta(t)$. Therefore, the small assumption $E(t)\leq\epsilon$ implies that $E_\eta(t)\lesssim \epsilon$.
Then combining the estimates \eqref{low estimate} and \eqref{high estimate}, we finally infer that
$$\frac d {dt} E_\eta(t)+2D_\eta(t)\lesssim (\epsilon^{\frac 1 2}+\delta+\eta^{\frac 1 2})D_\eta(t)+C_\delta\|\nabla_R \Lambda^s g\|^2_{L^2(\mathcal{L}^{2})}.$$
Choosing some sufficiently small fixed $\delta, \eta>0$ and sufficiently large $\lambda>C_\delta$, if $\epsilon$ is small enough, then we have
\begin{align*}
\sup_{t\in[0,T]}E_\eta(t)+\int_{0}^{T}D_\eta(t)dt\leq E_\eta(0).
\end{align*}
Using the equivalence of $E(t)\sim E_\eta(t)$ and $D(t)\sim D_\eta(t)$, then there exist $C_0>1$ such that
\begin{align*}
\sup_{t\in[0,T]} E(t)+\int_{0}^{T}D(t)dt\leq C_{0}E(0).
\end{align*}
We thus complete the proof of Proposition \ref{pro1}.
\end{proof}
Now, we need to prove the existence of local solutions in some appropriate spaces by a standard iterating method.
\begin{prop}\label{pro2}
Let $d\geq 2~and~s>1+\frac d 2$. Assume $E(0)\leq \frac {\epsilon} 2$. Then there exist a time $T>0$ such that the FENE polymeric system \eqref{eq1} admits a unique local strong solution $(\rho_,u,g)\in L^{\infty}(0,T;H^s\times H^s\times H^s(\mathcal{L}^2))$ and we have
\begin{align}
\sup_{t\in[0,T]} E(t)+\int_{0}^{T}H(t)dt\leq \epsilon,
\end{align}
where $H(t)=\mu\|\nabla u\|^2_{H^{s}}+(\mu+\mu')\|{\rm div}~u\|^2_{H^{s}}+\|\nabla_R g\|^2_{H^{s}(\mathcal{L}^{2})}.$
\end{prop}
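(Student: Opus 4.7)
The plan is to construct local solutions via a standard linear iteration (Friedrichs-type) scheme, derive uniform-in-$n$ bounds on the iterates by mimicking the energy estimates of Proposition \ref{pro1}, and then pass to the limit. Concretely, I would define a sequence $(\rho^{n+1}, u^{n+1}, g^{n+1})$ by freezing the nonlinear coefficients at the previous step: start from $(\rho^0, u^0, g^0) = (0,0,0)$ (or a smoothed version of the initial data), and solve the linear system
\begin{align*}
\rho^{n+1}_t + u^n\cdot\nabla\rho^{n+1} + (1+\rho^n)\, \mathrm{div}\, u^{n+1} &= 0,\\
u^{n+1}_t + u^n\cdot\nabla u^{n+1} - \tfrac{1}{1+\rho^n}\mathrm{div}\,\Sigma(u^{n+1}) + h(\rho^n)\nabla\rho^{n+1} &= \tfrac{1}{1+\rho^n}\mathrm{div}\,\tau(g^n),\\
g^{n+1}_t + u^n\cdot\nabla g^{n+1} + \mathcal{L}g^{n+1} &= -\tfrac{1}{\psi_\infty}\nabla_R\cdot(\sigma(u^n)Rg^{n+1}\psi_\infty),
\end{align*}
which is a linear symmetric-hyperbolic (for $\rho$) / parabolic (for $u$ and $g$) system whose well-posedness in $H^s \times H^s \times H^s(\mathcal{L}^2)$ on a short interval is classical once $1+\rho^n$ is bounded away from zero.

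Next I would derive a uniform energy estimate for the iterates on some interval $[0,T]$ that is independent of $n$. Applying $\Lambda^s$ to each equation, taking $L^2$ or $L^2(\mathcal{L}^2)$ inner products against the appropriate weights $h(\rho^n)\Lambda^s\rho^{n+1}$, $(1+\rho^n)\Lambda^s u^{n+1}$, and $\Lambda^s g^{n+1}$, and using the commutator estimates of Lemma \ref{Lemma4}, Lemma \ref{Lemma2}, and Lemma \ref{Lemma1} exactly as in Proposition \ref{pro1}, I would obtain a differential inequality of the form
\begin{align*}
\tfrac{d}{dt}E^{n+1}(t) + H^{n+1}(t) \lesssim P(E^n(t))\bigl(E^{n+1}(t) + H^{n+1}(t)\bigr),
\end{align*}
for some polynomial $P$. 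A bootstrap argument then yields a uniform $T>0$ and a uniform bound $\sup_{[0,T]}E^{n}(t) + \int_0^T H^n(t)\,dt \leq \epsilon$ provided $E(0) \leq \epsilon/2$. In parallel I would verify by induction the constraints $\int_B g^{n+1}\psi_\infty\, dR = 0$ (integrate the $g$-equation against $\psi_\infty$) and $1+\rho^{n+1} \geq 1/2$ (from the transport structure of the $\rho$-equation combined with the continuous embedding $H^s \hookrightarrow L^\infty$), so that all the divisions by $1+\rho^n$ remain legitimate.

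To pass to the limit, I would show that $(\rho^{n+1}-\rho^n, u^{n+1}-u^n, g^{n+1}-g^n)$ contracts in the weaker norm $L^\infty_T(L^2 \times L^2 \times L^2(\mathcal{L}^2))$ with associated dissipation in $L^2_T(H^1 \times H^1 \times H^1(\mathcal{L}^2))$. The differences satisfy a system obtained by subtracting consecutive equations, and the same test-function scheme plus the uniform high-norm bounds above yields a Gronwall inequality showing the sequence is Cauchy on $[0,T]$ after possibly shrinking $T$. Strong convergence in the weaker space combined with the uniform high-norm bound gives a limit $(\rho, u, g)$ lying in $L^\infty(0,T; H^s\times H^s\times H^s(\mathcal{L}^2))$, which by a standard weak-strong interpolation argument solves \eqref{eq1}. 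Uniqueness is obtained by applying the same difference energy estimate to two solutions with the same data. The final bound $\sup E(t) + \int_0^T H(t)\,dt \leq \epsilon$ follows from lower semicontinuity of the norms with respect to weak-$*$ limits.

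The main technical obstacle is controlling the Fokker--Planck part on the configuration ball $B$ simultaneously with the compressible fluid part: specifically, ensuring that the boundary behavior of $\psi_\infty$ does not spoil the commutator estimates, and that the drag term $\frac{1}{\psi_\infty}\nabla_R\cdot(\sigma(u^n)Rg^{n+1}\psi_\infty)$ can be handled without losing a derivative. This is where the cancellation identity \eqref{co} in the co-rotation case and the Hardy-type estimate of Lemma \ref{Lemma2} are essential, mirroring their use in the a priori estimate of Proposition \ref{pro1}; aside from this, the proof is a rather routine adaptation of the classical local well-posedness theory for compressible Navier--Stokes to the coupled polymeric setting.
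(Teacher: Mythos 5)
Your proposal follows essentially the same route as the paper: a Picard-type iteration with coefficients frozen at step $n$, uniform high-order energy bounds on the iterates mirroring the a priori estimate of Proposition \ref{pro1}, a contraction argument for the differences in the weaker $L^2\times L^2\times L^2(\mathcal{L}^2)$ topology, and passage to the limit. The only non-cosmetic variation is that you place $\operatorname{div}\tau(g^n)$ on the right-hand side of the momentum equation, whereas the paper uses $\operatorname{div}\tau(g^{n+1})$; since the $g^{n+1}$-equation depends only on $u^n$ and so can be solved first, both choices give a well-posed linear scheme, and the resulting estimates are interchangeable (your version bounds the stress source via Lemma \ref{Lemma2} and the inductive bound on $E^n$, the paper's via the dissipation of the current iterate). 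One small imprecision: the contraction for the density difference has no $H^1$ dissipation — the $\rho$-equation is hyperbolic, so the dissipation functional for the differences only controls $\nabla\tilde u$ and $\nabla_R\tilde g$, as in the paper's $H_1(t)$ — but this does not affect the Gronwall closure.
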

\begin{proof}
First, we consider the iterating approximating sequence as follows:
\begin{align}\label{approximate}
\left\{
\begin{array}{ll}
\rho^{n+1}_t+(1+\rho^{n}){\rm div}~u^{n+1}=-u^{n}\cdot\nabla\rho^{n+1} , \\[1ex]
u^{n+1}_t-\frac 1 {1+\rho^{n}} {\rm div}\Sigma{(u^{n+1})}+\frac {P'(1+\rho^{n})} {1+\rho^{n}} \nabla\rho^{n+1}=-u^{n}\cdot\nabla u^{n+1}+\frac 1 {1+\rho^{n}} {\rm div}~\tau^{n+1}, \\[1ex]
g^{n+1}_t+\mathcal{L}g^{n+1}=-u^{n}\cdot\nabla g^{n+1}-\frac 1 {\psi_\infty}\nabla_R\cdot(\sigma(u^{n})Rg^{n+1}\psi_\infty),  \\[1ex]
\end{array}
\right.
\end{align}
with the initial data $(\rho^{n+1}, u^{n+1}, g^{n+1})|_{t=0}=(\rho_0(x), u_0(x), g_0(x,R))$. Furthermore, start with   \\
$(\rho^{0}(t,x), u^{0}(t,x), g^{0}(t,x,R))=(0, 0, 0)$, we obtain the approximate sequence $(\rho^n, u^n, g^n)$.

Let $E^n(t)=E(\rho^n, u^n, g^n)(t)$ and $H^n(t)=H(\rho^n, u^n, g^n)(t)$. We claim that: there exist $\epsilon>0$ and $T>0$ such that for any $n\in N$, if $E(0)\leq \frac \epsilon 2$ and $\sup_{t\in [0,T]}E^n(t)+\int_{0}^{T}H^n(t)dt\leq\epsilon$, then
\begin{align}\label{uniform}
\sup_{t\in [0,T]}E^{n+1}(t)+\int_{0}^{T}H^{n+1}(t)dt\leq\epsilon.
\end{align}
Similar to the estimates \eqref{low estimate} and \eqref{high estimate}, using Lemmas \ref{Lemma0}-\ref{Lemma2} and \ref{Lemma4}, we obtain
\begin{align*}
\frac 1 2 \frac d {dt} \|\rho^{n+1}\|^2_{H^s}&\lesssim \|u^{n}\|_{H^s}\|\rho^{n+1}\|^2_{H^s}+\|\nabla u^{n+1}\|_{H^s}\|\rho^{n+1}\|_{H^s}  \\
&+\|\rho^{n}\|_{H^s}\|\rho^{n+1}\|_{H^s}(\|\nabla u^{n+1}\|_{H^s}+\|u^{n+1}\|_{H^s}),
\end{align*}
and
\begin{align*}
&\frac 1 2 \frac d {dt} \|u^{n+1}\|^2_{H^s}+\mu\|\nabla u^{n+1}\|^2_{H^{s}}+(\mu+\mu')\|{\rm div}~u^{n+1}\|^2_{H^{s}}  \\
&\lesssim \|u^{n}\|_{H^s}\|u^{n+1}\|^2_{H^s}+\|\nabla u^{n+1}\|_{H^s}\|\rho^{n}\|_{H^s}+\|\rho^{n}\|_{H^s}\|u^{n+1}\|_{H^s}\|\rho^{n+1}\|_{H^s}  \\
&+\|\rho^{n}\|_{H^s}\|\nabla u^{n+1}\|_{H^s}(\|u^{n+1}\|_{H^s}+\|\nabla_R g^{n+1}\|_{H^{s}(\mathcal{L}^{2})})  \\
&+\delta \|\nabla u^{n+1}\|_{H^s}\|\nabla_R g^{n+1}\|_{H^{s}(\mathcal{L}^{2})}+C_\delta\|\nabla u^{n+1}\|_{H^s}\|g^{n+1}\|_{H^{s}(\mathcal{L}^{2})}.
\end{align*}
We can infer from $(\ref{approximate})_3$ that
\begin{align*}
\frac 1 2 \frac d {dt} \|g^{n+1}\|^2_{H^{s}(\mathcal{L}^{2})}+\|\nabla_R g^{n+1}\|^2_{H^{s}(\mathcal{L}^{2})}
&\lesssim (\|u^{n}\|_{H^s}+\|\nabla u^{n}\|_{H^s})\|g^{n+1}\|^2_{H^{s}(\mathcal{L}^{2})}   \\
&+\|\nabla u^{n}\|_{H^s}\|g^{n+1}\|_{H^{s}(\mathcal{L}^{2})}\|\nabla_R g^{n+1}\|_{H^{s}(\mathcal{L}^{2})}.
\end{align*}
Adding up the above estimates, we obtain
\begin{align*}
\frac 1 2 \frac d {dt} E^{n+1}(t)+H^{n+1}(t)
&\leq C[(E^{n})^{\frac 1 2}E^{n+1}+((E^{n})^{\frac 1 2}+(E^{n+1})^{\frac 1 2})(H^{n+1})^{\frac 1 2}\\
&+(E^{n})^{\frac 1 2}(E^{n+1})^{\frac 1 2}(H^{n+1})^{\frac 1 2}  +(E^{n})^{\frac 1 2}H^{n+1}+\delta H^{n+1}\\
&+(H^{n})^{\frac 1 2}E^{n+1}+(E^{n+1})^{\frac 1 2}(H^{n})^{\frac 1 2}(H^{n+1})^{\frac 1 2}].
\end{align*}
Using the induction assumptions $\sup_{t\in [0,T]}E^n(t)+\int_{0}^{T}H^n(t)dt\leq\epsilon$, we get
$$((E^{n})^{\frac 1 2}+(E^{n+1})^{\frac 1 2})(H^{n+1})^{\frac 1 2}\leq \frac 1 {8C}H^{n+1}+2C(E^{n+1}+\epsilon),$$
$$(E^{n})^{\frac 1 2}(E^{n+1})^{\frac 1 2}(H^{n+1})^{\frac 1 2}\leq \epsilon^{\frac 1 2} H^{n+1}+\epsilon^{\frac 1 2} E^{n+1},$$
$$(E^{n})^{\frac 1 2}H^{n+1}\leq \epsilon^{\frac 1 2} H^{n+1},$$
$$(H^{n})^{\frac 1 2}E^{n+1}\leq(1+H^{n})E^{n+1},$$
$$(E^{n+1})^{\frac 1 2}(H^{n})^{\frac 1 2}(H^{n+1})^{\frac 1 2}\leq \frac 1 {8C}H^{n+1}+2CE^{n+1}H^{n}.$$
Let $\mathcal{E}_n(t)=E^{n}(t)+\int_{0}^{t}H^{n}(s)ds$. Then we deduce that
\begin{align*}
&\frac 1 2 E^{n+1}(t)+(\frac 3 4 -C \delta-C\epsilon^{\frac 1 2})\int_{0}^{t}H^{n+1}(s)ds  \\
&\leq \frac 1 2 E^{n+1}(0)+Ct\epsilon+Ct(1+\epsilon^{\frac 1 2})\sup_{s\in[0,t]}\mathcal{E}_{n+1}(s)+C\epsilon\sup_{s\in[0,t]}\mathcal{E}_{n+1}(s).
\end{align*}
If constant $\delta,~\epsilon^{\frac 1 2}\leq \frac 1 {32C}$, for $t\leq T$, then
\begin{align*}
&(\frac 1 2-CT(1+\epsilon^{\frac 1 2})-C\epsilon) \sup_{s\in[0,t]}\mathcal{E}_{n+1}(s) \leq \frac 1 4 \epsilon+CT\epsilon,
\end{align*}
which implies that
\begin{align*}
&\sup_{t\in[0,T]}\mathcal{E}_{n+1}(t) \leq \epsilon,
\end{align*}
where $T\leq \epsilon^{\frac 1 2}\leq \frac 1 {32C}$. The claim \eqref{uniform} is true. We thus complete the proof of uniform bound for $(\rho^n, u^n, g^n)$.

We now prove the convergence in a low norm by using \eqref{uniform} with $n\geq 0$. Let $\tilde{\rho}^{n+1}=\rho^{n+1}-\rho^{n}$, $\tilde{u}^{n+1}=u^{n+1}-u^{n}$ and $\tilde{g}^{n+1}=g^{n+1}-g^{n}$ , then it follows from \eqref{approximate} that
\begin{align}\label{convergence}
\left\{
\begin{array}{ll}
\tilde{\rho}^{n+1}_t+(1+\rho^{n}){\rm div}~\tilde{u}^{n+1}=-u^{n}\cdot\nabla\tilde{\rho}^{n+1}-\tilde{u}^{n}\cdot\nabla\rho^{n}-\tilde{\rho}^{n}{\rm div}~u^{n}, \\
\tilde{u}^{n+1}_t-i(\rho^{n}){\rm div}\Sigma{(\tilde{u}^{n+1})}-(i(\rho^{n})-i(\rho^{n-1})){\rm div}\Sigma{(u^{n})}\\[1ex]
+h(\rho^{n}) \nabla\tilde{\rho}^{n+1}+(h(\rho^{n})-h(\rho^{n-1}))\nabla\rho^{n}  \\ [1ex]
=-u^{n}\cdot\nabla \tilde{u}^{n+1}-\tilde{u}^{n}\cdot\nabla u^{n}+i(\rho^{n}) {\rm div}~\tilde{\tau}^{n+1}+(i(\rho^{n})-i(\rho^{n-1})){\rm div}~\tau^{n}, \\[1ex]
\tilde{g}^{n+1}_t+\mathcal{L}\tilde{g}^{n+1}=-u^{n}\cdot\nabla \tilde{g}^{n+1}-\tilde{u}^{n}\cdot\nabla g^{n} \\[1ex]
-\frac 1 {\psi_\infty}\nabla_R\cdot(\sigma(u^{n})R\tilde{g}^{n+1}\psi_\infty)-\frac 1 {\psi_\infty}\nabla_R\cdot(\sigma(\tilde{u}^{n})Rg^{n}\psi_\infty).  \\[1ex]
\end{array}
\right.
\end{align}
By the standard energy estimate, we get
\begin{align*}
\frac 1 2 \frac d {dt} \|\tilde{\rho}^{n+1}\|^2_{L^2}&\lesssim \|\nabla u^{n}\|_{L^\infty}\|\tilde{\rho}^{n+1}\|^2_{L^2}+(1+\|\rho^{n}\|_{L^\infty})\|\nabla \tilde{u}^{n+1}\|_{L^2}\|\tilde{\rho}^{n+1}\|_{L^2}  \\
&+\|\nabla\rho^{n}\|_{L^\infty}\|\tilde{u}^{n}\|_{L^2}\|\tilde{\rho}^{n+1}\|_{L^2}+\|\nabla u^{n}\|_{L^\infty}\|\tilde{\rho}^{n}\|_{L^2}\|\tilde{\rho}^{n+1}\|_{L^2} \\
&\lesssim \epsilon^{\frac 1 2 }\|\tilde{\rho}^{n+1}\|^2_{L^2}+\|\nabla \tilde{u}^{n+1}\|_{L^2}\|\tilde{\rho}^{n+1}\|_{L^2}+ \epsilon^{\frac 1 2 }\|\tilde{\rho}^{n+1}\|_{L^2}(\|\tilde{u}^{n}\|_{L^2}+\|\tilde{\rho}^{n}\|_{L^2}),
\end{align*}
and
\begin{align*}
&\frac 1 2 \frac d {dt} \|\tilde{u}^{n+1}\|^2_{L^2}+\mu\|\nabla \tilde{u}^{n+1}\|^2_{L^2}+(\mu+\mu')\|{\rm div}~\tilde{u}^{n+1}\|^2_{L^2}  \\
&\lesssim \|\nabla u^{n}\|_{L^\infty}\|\tilde{u}^{n+1}\|^2_{L^2}+\|\nabla \tilde{u}^{n+1}\|_{L^2}\|\tilde{\rho}^{n+1}\|_{L^2}+\|\nabla u^{n}\|_{L^\infty}\| \tilde{u}^{n}\|_{L^2}\|\tilde{u}^{n+1}\|_{L^2}  \\
&+\|\nabla \rho^{n}\|_{L^\infty}\| \tilde{u}^{n+1}\|_{L^2}(\|\tilde{\rho}^{n+1}\|_{L^2}+\|\tilde{\rho}^{n}\|_{L^2})+\|{\rm div}\Sigma{(u^{n})}\|_{L^\infty}\| \tilde{u}^{n+1}\|_{L^2}\|\tilde{\rho}^{n}\|_{L^2}  \\
&+\|\nabla\rho^{n}\|_{L^\infty}\| \tilde{u}^{n+1}\|_{L^2}(\|\nabla\tilde{u}^{n+1}\|_{L^2}+\|\nabla_R \tilde{g}^{n+1}\|_{L^{2}(\mathcal{L}^{2})})+\|\nabla\nabla_R g^{n}\|_{L^{\infty}(\mathcal{L}^{2})}\| \tilde{u}^{n+1}\|_{L^2}\|\tilde{\rho}^{n}\|_{L^2}  \\
&+\delta \|\nabla \tilde{u}^{n+1}\|_{L^2}\|\nabla_R \tilde{g}^{n+1}\|_{L^{2}(\mathcal{L}^{2})}+C_\delta\|\nabla \tilde{u}^{n+1}\|_{L^2}\|\tilde{g}^{n+1}\|_{L^{2}(\mathcal{L}^{2})}\\
&\lesssim \epsilon^{\frac 1 2 }(\|\tilde{\rho}^{n+1}\|^2_{L^2}+\|\tilde{u}^{n+1}\|^2_{L^2}+\|\tilde{\rho}^{n}\|^2_{L^2}+\|\tilde{u}^{n}\|^2_{L^2})+\|\nabla \tilde{u}^{n+1}\|_{L^2}(\|\tilde{\rho}^{n+1}\|_{L^2}+\|\tilde{g}^{n+1}\|_{L^{2}(\mathcal{L}^{2})})  \\
&+(\|{\rm div}\Sigma{(u^{n})}\|_{L^\infty}+\|\nabla\nabla_R g^{n}\|_{L^{\infty}(\mathcal{L}^{2})})\| \tilde{u}^{n+1}\|_{L^2}\|\tilde{\rho}^{n}\|_{L^2}+\delta \|\nabla \tilde{u}^{n+1}\|_{L^2}\|\nabla_R \tilde{g}^{n+1}\|_{L^{2}(\mathcal{L}^{2})}  \\
&+\epsilon^{\frac 1 2 }\| \tilde{u}^{n+1}\|_{L^2}(\|\nabla\tilde{u}^{n+1}\|_{L^2}+\|\nabla_R \tilde{g}^{n+1}\|_{L^{2}(\mathcal{L}^{2})}).
\end{align*}
We can infer from $(\ref{convergence})_3$ that
\begin{align*}
\frac 1 2 \frac d {dt} \|\tilde{g}^{n+1}\|^2_{L^{2}(\mathcal{L}^{2})}+\|\nabla_R \tilde{g}^{n+1}\|^2_{L^{2}(\mathcal{L}^{2})}
&\lesssim \|\nabla u^{n}\|_{L^\infty}\|\tilde{g}^{n+1}\|^2_{L^2(\mathcal{L}^{2})}   \\
&+\|\tilde{u}^{n}\|_{L^2}\|\nabla g^{n}\|_{L^\infty(\mathcal{L}^{2})}\|\tilde{g}^{n+1}\|_{L^2(\mathcal{L}^{2})}\\
&+\|\nabla \tilde{u}^{n}\|_{L^2}\|g^{n}\|_{L^\infty(\mathcal{L}^{2})}\|\nabla_R \tilde{g}^{n+1}\|_{L^2(\mathcal{L}^{2})}  \\
&\lesssim \epsilon^{\frac 1 2 }(\|\tilde{g}^{n+1}\|^2_{L^2(\mathcal{L}^{2})}+\|\tilde{u}^{n}\|^2_{L^2}+\|\nabla \tilde{u}^{n}\|_{L^2}\|\nabla_R \tilde{g}^{n+1}\|_{L^2(\mathcal{L}^{2})}).
\end{align*}
Then we introduce some functionals for $(\rho,u,g)$ as follows:
$$E_1(t)=\|\rho\|^2_{L^{2}}+\|u\|^2_{L^{2}}+\|g\|^2_{L^{2}(\mathcal{L}^{2})},$$
and
$$H_1(t)=\mu\|\nabla u\|^2_{L^{2}}+(\mu+\mu')\|{\rm div}~u\|^2_{L^{2}}+\|\nabla_R g\|^2_{L^{2}(\mathcal{L}^{2})}.$$
Denote $\widetilde{E}^n(t)=E_1(\tilde{\rho}^n,\tilde{u}^n,\tilde{g}^n)(t)$, $\widetilde{H}^n(t)=H_1(\tilde{\rho}^n,\tilde{u}^n,\tilde{g}^n)(t)$ and $\widetilde{\mathcal{E}}^n(t)=\widetilde{E}^n(t)+\int_{0}^{t}\widetilde{H}^n(s)ds$. Combining the above estimates, we have
\begin{align*}
\frac 1 2 \frac d {dt} \widetilde{E}^{n+1}+\widetilde{H}^{n+1}
&\leq C\epsilon^{\frac 1 2 }( \widetilde{E}^{n+1}+\widetilde{H}^{n+1}  + \widetilde{E}^{n}+\widetilde{H}^{n})\\
&+(\frac 1 4+\delta)\widetilde{H}^{n+1}+C\widetilde{E}^{n+1}+(H^n)^{\frac 1 2}(\widetilde{E}^{n+1}+\widetilde{E}^{n}),
\end{align*}
where $H^n(t)=\mu\|\nabla u^n\|^2_{H^{s}}+(\mu+\mu')\|{\rm div}~u^n\|^2_{H^{s}}+\|\nabla_R g^n\|^2_{H^{s}(\mathcal{L}^{2})}.$ Since $\widetilde{E}^{n}(0)=0$, we have
\begin{align*}
(\frac 1 2-CT-C\epsilon^{\frac 1 2 })\sup_{t\in[0,T]}\widetilde{\mathcal{E}}^{n+1}(t)\leq (C\epsilon^{\frac 1 2 }T+C\epsilon^{\frac 1 2 })\sup_{t\in[0,T]}\widetilde{\mathcal{E}}^n(t)
+\int_{0}^{T}(H^n)^{\frac 1 2}(\widetilde{E}^{n+1}+\widetilde{E}^{n})dt,
\end{align*}
which implies that
\begin{align*}
(\frac 1 2-CT-CT^{\frac 1 2 }\epsilon^{\frac 1 2 }-C\epsilon^{\frac 1 2 })\sup_{t\in[0,T]}\widetilde{\mathcal{E}}^{n+1}(t)\leq (C\epsilon^{\frac 1 2 }T+CT^{\frac 1 2 }\epsilon^{\frac 1 2 }+C\epsilon^{\frac 1 2 })\sup_{t\in[0,T]}\widetilde{\mathcal{E}}^n(t).
\end{align*}
Choosing sufficiently small $\epsilon$, $T$ and using a standard compactness argument, we finally get a unique local solution of the compressible FENE system \eqref{eq1} with $\sup_{t\in[0,T]} E(t)+\int_{0}^{T}H(t)dt\leq \epsilon$. We thus complete the proof of Proposition \ref{pro2}.
\end{proof}
{\bf The proof of Theorem \ref{th1}:}

Combining Propositions \ref{pro1} and \ref{pro2}, we prove the global-in-time solutions of \eqref{eq1} by using the bootstrap argument. We assume that the initial datum satisfies $E(0)\leq \epsilon_0$ with $\epsilon_0=\frac {\epsilon} {2C_0}$ and $C_0>1$. Applying Proposition \ref{pro2}, we have the unique local solution on $[0,T]$ with $T>0$, and $\sup_{t\in[0,T]} E(t)\leq \epsilon$. Using global priori estimate stared in Proposition \ref{pro1}, we obtain
$$E(T)\leq C_0 E(0)\leq \frac {\epsilon} {2}. $$
Applying Proposition \ref{pro2} again, we get the unique local solution result on the time interval $t\in[T,2T]$, satisfying $\sup_{t\in[T,2T]} E(t)\leq \epsilon$. So we have
$$\sup_{t\in[0,2T]} E(t)\leq \epsilon.$$
Then global priori estimate stared in Proposition \ref{pro1} yields
$$E(2T)\leq C_0 E(0)\leq \frac {\epsilon} {2}. $$
Repeating this process, we show global existence of strong solution for \eqref{eq1} near equilibrium. Furthermore, we obtain $\sup_{t\in[0,\infty)} E(t)+\int_{0}^{\infty}D(t)dt\leq C_{0}E(0).$
\hfill$\Box$

\section{Optimal decay rate}
This section is devoted to investigating optimal decay rate of global strong solutions for the compressible co-rotation FENE dumbbell model with dimension $d\geq3$.
In order to obtain the optimal $L^2$ decay rate, we will use the linear spectral theory. For this purpose, we rewrite the first two equations of \eqref{eq1} so that all the nonlinear terms $F(U)$ and the external term $i(\rho) {\rm div}~\tau$ appear at the right hand side of equations:
\begin{align}\label{eq2}
\left\{
\begin{array}{ll}
\rho_t+{\rm div}~u=-u\cdot\nabla\rho-\rho {\rm div}~u , \\[1ex]
u_t-{\rm div}\Sigma{(u)}+\gamma \nabla\rho=-u\cdot\nabla u+(i(\rho)-1){\rm div}\Sigma{(u)}+(\gamma-h(\rho))\nabla\rho+i(\rho) {\rm div}~\tau,
\end{array}
\right.
\end{align}
where $h(\rho)=\frac {P'(1+\rho)} {1+\rho}$ and $i(\rho)=\frac 1 {\rho+1}$.
Taking the Fourier transform to system \eqref{eq2}, we have
\begin{align}\label{eq3}
\hat{U}_t-A(\xi)\hat{U}=\widehat{F(U)}+\mathcal{F}_x(0,{\rm div}(i(\rho)\tau))^T-\mathcal{F}_x(0,\tau\nabla i(\rho) )^T,
\end{align}
where $A(\xi)$ is defined as
\begin{equation} 
\left(
\begin{array}{cc}
0 & -i\xi^T \\
-i\xi\gamma & -\mu|\xi|^2 Id-(\mu+\mu')\xi\otimes\xi \\
\end{array}
\right).
\end{equation}
By some simple calculation, we get the determinant
$$det(A-\lambda Id)=(\lambda+\mu|\xi|^2)^{d-1}(\lambda^2+(2\mu+\mu')|\xi|^2 \lambda+\gamma|\xi|^2).$$
The eigenvalues $\lambda_j$ of $A(\xi)$ and their projections $P_j$ are analyzed by
\begin{lemm}\cite{Matsumura,Li2011Large}\label{Lemma5}
(1) $\lambda_j$ depends on $i|\xi|$ only and if $|\xi|=0$, then $\lambda_j=0$. \\
(2)The eigenvalues $\lambda_j$ of $A(\xi)$ can be computed as
\begin{align}\label{eq4}
\left\{
\begin{array}{ll}
\lambda_0=-\mu|\xi|^2, \\[1ex]
\lambda_+=-(\mu+\frac 1 2 \mu')|\xi|^2+\frac 1 2 i\sqrt{4\gamma|\xi|^2-(2\mu+\mu')^2|\xi|^4}, \\[1ex]
\lambda_-=-(\mu+\frac 1 2 \mu')|\xi|^2-\frac 1 2 i\sqrt{4\gamma|\xi|^2-(2\mu+\mu')^2|\xi|^4}.
\end{array}
\right.
\end{align}
For $a.e.$ $|\xi|>0$, we have $rank(\lambda_0 Id-A(\xi))=2$.  \\
(3)The semigroup $e^{tA}$ is expressed as
$$e^{tA}=e^{t\lambda_0}P_0+e^{t\lambda_+}P_+ +e^{t\lambda_-}P_- ,$$
where the project $P_j$ can be computed as
$$P_j=\prod_{i\neq j} \frac {A(\xi)-\lambda_i I} {\lambda_j-\lambda_i}.$$
(4)If $|\xi|\leq r_1$, for a positive constant $\beta_{j}$, we have
$\|P_j\|_{L^\infty}\leq C$ and $e^{t\lambda_j}\sim e^{-\beta_{j} t|\xi|^2}$.  \\
If $|\xi|> r_1$, for a positive constant $\beta_2$, we deduce that
$\|e^{tA}\|_{L^\infty}\leq Ce^{-\beta_2 t}$.
\end{lemm}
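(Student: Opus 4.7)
The plan is to carry out a direct spectral analysis of the $(d{+}1)\times(d{+}1)$ matrix-valued symbol $A(\xi)$, exploiting rotational symmetry to reduce to a one-parameter family, and then to extract the semigroup bounds from explicit eigenvalue formulas in the low- and high-frequency regimes $|\xi|\le r_1$ and $|\xi|>r_1$.

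For (1) and (2), I would first observe that for every $R\in SO(d)$, setting $Q=\mathrm{diag}(1,R)$, a direct computation gives $A(R\xi)=QA(\xi)Q^{-1}$, so the spectrum depends only on $|\xi|$; since each entry is polynomial in $(i\xi,|\xi|^2)$, the eigenvalues are in fact functions of $i|\xi|$, vanishing at $\xi=0$. To obtain the explicit formulas, take $\xi=(|\xi|,0,\ldots,0)$: the matrix becomes block-triangular, consisting of a $2\times 2$ block coupling the density to the first velocity component and a diagonal $(d{-}1)\times(d{-}1)$ block equal to $-\mu|\xi|^2\,\mathrm{Id}$. The quadratic factor in the characteristic polynomial is precisely the determinant of the $2\times 2$ block, and the quadratic formula yields $\lambda_\pm$ as stated. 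Since for a.e.\ $|\xi|>0$ the three values $\lambda_0,\lambda_\pm$ are distinct, $\ker(\lambda_0\,\mathrm{Id}-A(\xi))$ has dimension $d-1$, and hence $A(\xi)-\lambda_0\,\mathrm{Id}$ has rank $2$.

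For (3), the distinctness of the eigenvalues implies diagonalizability and the spectral projectors are given by the Lagrange interpolation formula
\[
P_j=\prod_{i\ne j}\frac{A(\xi)-\lambda_i\,\mathrm{Id}}{\lambda_j-\lambda_i},
\]
yielding $e^{tA}=\sum_{j}e^{t\lambda_j}P_j$. For (4), I would Taylor-expand at $|\xi|=0$ to obtain $\lambda_\pm=-(\mu+\tfrac12\mu')|\xi|^2\pm i\sqrt{\gamma}\,|\xi|+O(|\xi|^3)$, so $|e^{t\lambda_\pm}|\le e^{-\beta_\pm t|\xi|^2}$ on $|\xi|\le r_1$, and $|e^{t\lambda_0}|=e^{-\mu t|\xi|^2}$; the projectors remain uniformly bounded there because the pairwise gaps are of order $|\xi|$ while the entries of $A(\xi)-\lambda_i\,\mathrm{Id}$ are of order $|\xi|$ or $|\xi|^2$. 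For $|\xi|>r_1$ with $r_1$ taken large enough, the discriminant $(2\mu+\mu')^2|\xi|^4-4\gamma|\xi|^2$ is positive, $\lambda_\pm$ are real, and a short asymptotic computation gives $\lambda_+\to -\gamma/(2\mu+\mu')$ and $\lambda_-\sim -(2\mu+\mu')|\xi|^2$, while $\lambda_0=-\mu|\xi|^2$. Choosing $\beta_2=\tfrac12\min\bigl(\mu r_1^2,\gamma/(2\mu+\mu')\bigr)$ bounds all three real parts above by $-\beta_2$; the projectors stay bounded because the pairwise gaps grow at least like $|\xi|^2$ in this regime, and combining these estimates gives $\|e^{tA}\|_{L^\infty}\le Ce^{-\beta_2 t}$.

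The main obstacle is the crossover frequency $|\xi|_\ast=2\sqrt{\gamma}/(2\mu+\mu')$, where the discriminant vanishes, $\lambda_+=\lambda_-$, and $A(\xi)$ fails to be diagonalizable, so the Lagrange formula for the projectors breaks down at a single sphere. One must either choose $r_1\ne|\xi|_\ast$ so that this value lies strictly inside one of the two regimes, or replace the projector formula by a Dunford–Taylor contour integral in a neighbourhood of $|\xi|_\ast$. Since $e^{tA}$ is continuous in $\xi$ and any Jordan block at the crossover contributes at worst a polynomial factor $t$ that is absorbed by the exponential decay, the quantitative bounds of part (4) extend across the crossover without loss.
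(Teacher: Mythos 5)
The paper does not prove this lemma; it is cited verbatim from Matsumura--Nishida \cite{Matsumura} and Li--Zhang \cite{Li2011Large}, and your proposal reproduces the standard argument from those works. The rotational reduction $A(R\xi)=QA(\xi)Q^{-1}$ with $Q=\mathrm{diag}(1,R)$, the explicit $2\times 2$ plus $(d-1)\times(d-1)$ block decomposition for $\xi=(|\xi|,0,\dots,0)$, the Lagrange spectral-projection formula, and the low/high-frequency splitting are exactly the right steps, and your computations (characteristic polynomial, Taylor expansion of $\lambda_\pm$ near $0$, asymptotics for large $|\xi|$) are correct.

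One small sharpening of your remark on the crossover $|\xi|_*=2\sqrt{\gamma}/(2\mu+\mu')$: for part (4) to hold on all of $\{|\xi|\le r_1\}$ with bounded projectors and $e^{t\lambda_\pm}\sim e^{-\beta_\pm t|\xi|^2}$, it is not enough to have $r_1\ne|\xi|_*$; one must take $r_1<|\xi|_*$ so that $\lambda_\pm$ remain complex conjugates and the pairwise eigenvalue gaps stay bounded below by $c|\xi|$ on the whole low-frequency ball. The crossover sphere then sits inside the high-frequency regime $\{|\xi|>r_1\}$, where the estimate $\|e^{tA(\xi)}\|\le Ce^{-\beta_2 t}$ is obtained by combining a compactness/continuity argument on a neighbourhood of $|\xi|_*$ (all eigenvalues have real part bounded strictly below zero there, so $\|e^{tA(\xi)}\|$ decays exponentially uniformly on the compact annulus, Jordan block or not) with your asymptotic formulas as $|\xi|\to\infty$. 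This is precisely how the cited references pass through the degenerate sphere, and with this adjustment your proof proposal is complete and correct.
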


From \eqref{eq3}, we see that there is a linear term $\mathcal{F}_x(0,{\rm div}(i(\rho)\tau) )^T\sim \mathcal{F}_x(0,{\rm div}~\tau)^T$ in the right hand side. Therefore, we need to estimate the $L^1$-norm of the stress tensor $\tau$.
\begin{prop}
Under the condition in Theorem \ref{th2}. There exists a constant $C$ such that for any $t>0$, we have
\begin{align}\label{tau1}
\|\tau\|_{L^1}\lesssim \|g\|_{L^1_x(\mathcal{L}^p)}\lesssim e^{C\sqrt{t}}\|g_0\|_{L^1_x(\mathcal{L}^p)}.
\end{align}
\begin{proof}
Multiplying $p|g|^{p-2}g\psi_\infty$ by both sides of (\ref{eq1}) and integrating over $B$ with $R$, we obtain
\begin{align*}
&\frac{d}{dt}\int_B|g|^p \psi_\infty dR+\frac{4(p-1)}{p}\int_B\psi_\infty|\nabla_R(g ^{\frac{p}{2}})|^2 dR \\ \notag
&=-u\cdot\nabla_x\int_B|g|^p \psi_\infty dR+(p-1)\int_B\sigma(u)R\psi_\infty\nabla_R|g|^p dR.
\end{align*}
Using integration by parts and (\ref{co}), we have
\begin{align*}
\frac{d}{dt}\int_B|g|^p \psi_\infty dR+\frac{4(p-1)}{p}\int_B\psi_\infty|\nabla_R(g ^{\frac{p}{2}})|^2 dR
=-u\cdot\nabla_x\int_B|g|^p \psi_\infty dR,
\end{align*}
which implies that
\begin{align*}
\frac{d}{dt}\int_B|g|^p \psi_\infty dR\leq-u\cdot\nabla_x\int_B|g|^p \psi_\infty dR.
\end{align*}
Multiplying $\|g\|^{1-p}_{\mathcal{L}^p}$ by both sides of the above inequality, integrating over $\mathbb{R}^{d}$ with $x$ and applying Gronwall's inequality, we obtain
$$\|g\|_{L^1_x(\mathcal{L}^p)}\lesssim e^{C\int_{0}^{t}\|\nabla u\|_{L^\infty}ds}\|g_0\|_{L^1_x(\mathcal{L}^p)}\lesssim e^{C\sqrt{t}}\|g_0\|_{L^1_x(\mathcal{L}^p)}.$$
Applying Lemma \ref{Lemma2} and using the fact that $(p-1)k>1$,
\begin{align*}
\|\tau\|_{L^1}\lesssim \|g\|_{L^1_x(\mathcal{L}^p)}\lesssim e^{C\sqrt{t}}\|g_0\|_{L^1_x(\mathcal{L}^p)}.
\end{align*}
\end{proof}
\end{prop}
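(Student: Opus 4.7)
The statement splits into two inequalities. The first, $\|\tau\|_{L^1}\lesssim \|g\|_{L^1_x(\mathcal{L}^p)}$, is immediate from the second half of Lemma \ref{Lemma2}: since $(p-1)k>1$ we have the pointwise-in-$x$ bound $|\tau(g)(x)|\le C\|g(x,\cdot)\|_{\mathcal{L}^p}$, and integrating in $x$ gives the claim. The real work lies in the second inequality, for which I plan to derive an $L^p$-in-$R$ transport estimate for $g$ and then run Gronwall in $t$.

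I would test the third equation of \eqref{eq1} against $p|g|^{p-2}g\,\psi_\infty$ and integrate only in $R$ over $B$. The Fokker--Planck operator $\mathcal{L}g$ yields, after integration by parts, the non-negative entropy-type dissipation $\frac{4(p-1)}{p}\int_B \psi_\infty|\nabla_R(|g|^{p/2})|^2\,dR$, which I simply drop. The $x$-transport term $u\cdot\nabla g$ contributes the conservative expression $u\cdot\nabla_x\int_B|g|^p\psi_\infty\,dR$. The crucial step is the drag term $\frac{1}{\psi_\infty}\nabla_R\cdot(\sigma(u)Rg\psi_\infty)$: integrating by parts in $R$ once and then using $|g|^{p-2}g\,\nabla_R g = \frac{1}{p}\nabla_R|g|^p$ followed by a second integration by parts reduces it to a multiple of $\int_B |g|^p \nabla_R\cdot(\sigma(u)R\psi_\infty)\,dR$, which vanishes identically by the co-rotation identity \eqref{co}. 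One is thus left with
\begin{equation*}
\partial_t\int_B|g|^p\psi_\infty\,dR + u\cdot\nabla_x\int_B|g|^p\psi_\infty\,dR \le 0.
\end{equation*}

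Multiplying by $\|g\|_{\mathcal{L}^p}^{1-p}$ rewrites this as $\partial_t\|g\|_{\mathcal{L}^p}+u\cdot\nabla_x\|g\|_{\mathcal{L}^p}\le 0$ (absorbing a factor of $p$). Integrating in $x$ and integrating by parts on the convection term yields
\begin{equation*}
\frac{d}{dt}\|g\|_{L^1_x(\mathcal{L}^p)}\le \|div\,u\|_{L^\infty}\,\|g\|_{L^1_x(\mathcal{L}^p)},
\end{equation*}
so Gronwall gives $\|g(t)\|_{L^1_x(\mathcal{L}^p)}\le \exp\bigl(C\int_0^t\|\nabla u\|_{L^\infty}\,ds\bigr)\|g_0\|_{L^1_x(\mathcal{L}^p)}$. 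Finally, Cauchy--Schwarz in time combined with the Sobolev embedding $\|\nabla u\|_{L^\infty}\lesssim\|\nabla u\|_{H^{s-1}}$ (valid since $s-1>d/2$) and the global bound $\int_0^\infty D(t)\,dt<\infty$ from Theorem \ref{th1} yields $\int_0^t\|\nabla u\|_{L^\infty}\,ds\le C\sqrt{t}$, producing the advertised exponent.

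The main obstacle is conceptual rather than technical: the $\sqrt{t}$ growth of the exponent is intrinsic to the compressible setting, since Theorem \ref{th1} only ensures $\|\nabla u\|_{L^\infty}\in L^2_t$ rather than $L^1_t$. In the incompressible case one can exploit $div\,u=0$ to close the transport estimate with a uniform-in-time bound for $\|g\|_{L^1_x(\mathcal{L}^p)}$, but here a bound polynomial in $t$ seems out of reach without further decay information on $u$ itself --- which is of course precisely what the later analysis aims to establish. This is what forces the subsequent decay argument to rely on the exponential-in-time decay of $\|g\|_{L^2(\mathcal{L}^2)}$ and a refined Hardy-type bound for $\tau$, as flagged in the introduction.
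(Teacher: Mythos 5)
Your proof is correct and follows essentially the same route as the paper: test the Fokker--Planck equation against $p|g|^{p-2}g\,\psi_\infty$, drop the non-negative dissipation $\frac{4(p-1)}{p}\int_B\psi_\infty|\nabla_R(|g|^{p/2})|^2\,dR$, kill the drag term via two integrations by parts in $R$ and the co-rotation identity \eqref{co}, integrate in $x$ and apply Gronwall, then use Cauchy--Schwarz in $t$ with the Sobolev embedding and $\int_0^\infty D(t)\,dt<\infty$ from Theorem \ref{th1} to bound $\int_0^t\|\nabla u\|_{L^\infty}\,ds\lesssim\sqrt{t}$. The only (minor) difference is that you make the $x$-integration-by-parts step and the origin of the $\sqrt{t}$ exponent explicit, where the paper leaves them implicit.
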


\begin{rema}
For the incompressible FENE model, one can show that $\|\tau\|_{L^1}\leq C$. However, in the compressible case, the $L^1$-norm of $\tau$ has a exponential growth. This is the main difference between the incompressible and compressible model.
\end{rema}

{\bf The proof of Theorem \ref{th2}:}
Observe that \eqref{co1} and Theorem \ref{th1} ensures
\begin{align}
\frac {d} {dt} \|g\|^2_{L^2(\mathcal{L}^{2})}+\|\nabla_R g\|^2_{L^2(\mathcal{L}^{2})}\leq 0.
\end{align}
Using Lemma \ref{Lemma1}, we deduce that
\begin{align}
\|g\|_{L^2(\mathcal{L}^2)}\leq \|g_0\|_{L^2(\mathcal{L}^2)}e^{-Ct}.
\end{align}
According to \eqref{eq3}, we obtain
\begin{align}\label{eq5}
\hat{U}=e^{tA}\hat{U}(0,\xi)+\int_{0}^{t}e^{(t-s')A}(\widehat{F(U)}+\mathcal{F}_x(0,{\rm div}(i(\rho)\tau))^T+\mathcal{F}_x(0,\tau\nabla i(\rho) )^T )ds'.
\end{align}
Under the additional assumption $U(0,x)\in (L^1)^2$, using Lemma \ref{Lemma5}, we get
\begin{align}\label{ineq1}
\|e^{tA}\hat{U}(0,\xi)\|_{L^2}
&\leq (\int_{|\xi|\leq r_1}e^{2tA}|\hat{U}(0,\xi)|^2 d\xi)^{\frac 1 2}+(\int_{|\xi|> r_1}e^{2tA}|\hat{U}(0,\xi)|^2 d\xi)^{\frac 1 2}  \\ \notag
&\leq (\int_{|\xi|\leq r_1}e^{-2\beta_{j} t|\xi|^2} d\xi)^{\frac 1 2}\|\hat{U}(0,\xi)\|_{L^\infty}+Ce^{-\beta_2 t}\|\hat{U}(0,\xi)\|_{L^2}  \\ \notag
&\leq C(1+t)^{-\frac d 4}(\|U(0,x)\|_{L^1}+\|U(0,x)\|_{L^2}).
\end{align}
For the nonlinear terms $F(U)$ and $\delta=\frac 1 2 (s-1-\frac d 2)>0$, we can easily deduce that
\begin{align}\label{ineq2}
\|\int_{0}^{t}e^{(t-s')A}\widehat{F(U)}ds'\|_{L^2}
&\leq C\int_{0}^{t}(1+ t-s')^{-\frac d 4}(\|F(U)\|_{L^1}+\|F(U)\|_{L^2})ds'  \\ \notag
&\leq C\int_{0}^{t}(1+ t-s')^{-\frac d 4}(\|U\|_{L^2}\|\nabla U\|_{H^1}+\|U\|_{L^\infty}\|\nabla U\|_{H^{1}})ds'  \\ \notag
&\leq C\epsilon^{\frac 1 2} (\int_{0}^{t}(1+ t-s')^{-\frac d 2}\|U\|^2_{H^{s-1-\delta}}ds')^{\frac 1 2}.
\end{align}
Using Lemma \ref{Lemma5}, Remark \ref{rema} and Theorem \ref{th1}, we get
\begin{align}\label{ineq3}
\|\int_{0}^{t}e^{(t-s')A}\mathcal{F}_x(0,\tau\nabla i(\rho) )^T ds'\|_{L^2}
&\leq C\int_{0}^{t}(1+ t-s')^{-\frac d 4}(\|\tau\nabla i(\rho)\|_{L^1}+\|\tau\nabla i(\rho)\|_{L^2})ds'  \\ \notag
&\leq C\int_{0}^{t}(1+ t-s')^{-\frac d 4}\|\tau\|_{L^{2}}\|\nabla \rho\|_{H^{s-1}}ds'  \\ \notag
&\leq C(\int_{0}^{t}(1+ t-s')^{-\frac d 2}\|g\|_{L^2(\mathcal{L}^2)}\|\nabla_R g\|_{L^2(\mathcal{L}^2)}ds')^{\frac 1 2}  \\ \notag
&\leq C(\int_{0}^{t}(1+ t-s')^{-d}\|g\|^2_{L^2(\mathcal{L}^2)}ds')^{\frac 1 4}  \\ \notag
&\leq C(1+t)^{-\frac d 4}.
\end{align}
Using \eqref{tau1}, \eqref{ineq3}, Remark \ref{rema} and Theorem \ref{th1}, we have
\begin{align}\label{ineq4}
&\|\int_{0}^{t}e^{(t-s')A}\mathcal{F}_x(0,{\rm div}(i(\rho)\tau))^T ds'\|_{L^2} \\ \notag
&\leq C\int_{0}^{t}(\int_{|\xi|\leq r_1}e^{2(t-s')A}|\xi|^2|\mathcal{F}_x(i(\rho)\tau)|^2 d\xi)^{\frac 1 2}+(\int_{|\xi|> r_1}e^{2(t-s')A}|\xi|^2|\mathcal{F}_x(i(\rho)\tau)|^2 d\xi)^{\frac 1 2}ds'  \\ \notag
&\leq C\int_{0}^{t}(1+ t-s')^{-\frac d 4}\|\mathcal{F}_x(i(\rho)\tau)\|_{L^d}+\|\tau\nabla i(\rho)\|_{L^2}+\|\nabla\tau\|_{L^2})ds'  \\ \notag
&\leq C\int_{0}^{t}(1+ t-s')^{-\frac {d} 4}(\|\tau\|_{L^{\frac {d} {d-1}}}+\|\tau\|_{L^2}\|\nabla \rho\|_{H^{s-1}}+\|\tau\|_{L^2}^{1-\frac 1 {s}}
\|\tau\|_{H^{s}}^{\frac 1 {s}})d s'  \\ \notag
&\leq C(1+t)^{-\frac {d} 4}+C\int_{0}^{t}(1+ t-s')^{-\frac {d} 4}(\|\tau\|^{\frac 2 {d}}_{L^2}\|\tau\|^{1-\frac 2 {d}}_{L^1}
+\|\tau\|_{L^2}^{1-\frac 1 {s}}\|\nabla_R g\|^{\frac 1 {s}}_{H^{s}(\mathcal{L}^2)})ds'  \\ \notag
&\leq C(1+t)^{-\frac d 4}+C\int_{0}^{t}(1+ t-s')^{-\frac d 4}(\|\tau\|^{\frac 2 d}_{L^{2}}e^{C\sqrt{s'}}+\|\tau\|^{1-\frac 1 s}_{L^{2}}
\|\nabla_R g\|^{\frac 1 s}_{H^{s}(\mathcal{L}^2)})ds'  \\ \notag
&\leq C(1+t)^{-\frac d 4}+C\int_{0}^{t}(1+ t-s')^{-\frac d 4}e^{-Cs'}(\|\nabla_R g\|^{\frac 1 d}_{L^{2}(\mathcal{L}^2)}+\|\nabla_R g\|^{\frac 1 2
+\frac 1 {2s}}_{H^{s}(\mathcal{L}^2)})ds'  \\ \notag
&\leq C(1+t)^{-\frac d 4}.
\end{align}
It follows from \eqref{eq5}-\eqref{ineq4} that
\begin{align}\label{ineq5}
\|U\|_{L^2}\leq C(1+t)^{-\frac d 4}+C\epsilon^{\frac 1 2} (\int_{0}^{t}(1+ t-s')^{-\frac d 2}\|U\|^2_{H^{s-1-\delta}}ds')^{\frac 1 2}.
\end{align}
Multiplying $|\xi|^{s-1-\delta}$ to \eqref{eq5}, using Lemma \ref{Lemma5}, we obtain the higher order derivative estimates:
\begin{align}\label{ineq6}
\|e^{tA}|\xi|^{s-1-\delta}\hat{U}(0,\xi)\|_{L^2}
&\leq (\int_{|\xi|\leq r_1}|\xi|^{2(s-1-\delta)}e^{-2\beta_{j} t|\xi|^2} d\xi)^{\frac 1 2}\|\widehat{U_0}\|_{L^\infty}+Ce^{-\beta_2 t}\|U_0\|_{H^s}  \\ \notag
&\leq C(1+t)^{-\kappa}(\|U_0\|_{L^1}+\|U_0\|_{H^s}),
\end{align}
where $2\kappa=s-1-\delta+\frac d 2$. Using Theorem \ref{th1}, then we have
\begin{align}\label{ineq7}
&\|\int_{0}^{t}e^{(t-s')A}|\xi|^{s-1-\delta}\widehat{F(U)}ds'\|_{L^2}  \\ \notag
&\leq C\int_{0}^{t}(1+ t-s')^{-\kappa}(\|F(U)\|_{L^1}+\|F(U)\|_{H^{s-1-\delta}})ds'  \\ \notag
&\leq C\int_{0}^{t}(1+ t-s')^{-\kappa}(\|U\|_{L^2}\|\nabla U\|_{H^1}+\|U\|_{H^{s-1-\delta}}\|\nabla (\rho,u)^{T}\|_{H^{s-1-\delta}\times H^{s-\delta}})ds'  \\ \notag
&\leq C\epsilon^{\frac 1 2} (\int_{0}^{t}(1+ t-s')^{-2\kappa}\|U\|^2_{H^{s-1-\delta}}ds')^{\frac 1 2}.
\end{align}
Using Lemma \ref{Lemma5}, Remark \ref{rema} and Theorem \ref{th1}, we get
\begin{align}\label{ineq8}
&\|\int_{0}^{t}e^{(t-s')A}|\xi|^{s-1-\delta}\mathcal{F}_x(0,\tau\nabla i(\rho) )^T ds'\|_{L^2} \\ \notag
&\leq C\int_{0}^{t}(1+ t-s')^{-\kappa}(\|\tau\nabla i(\rho)\|_{L^1}+\|\tau\nabla i(\rho)\|_{H^{s-1-\delta}})ds'  \\ \notag
&\leq C\int_{0}^{t}(1+ t-s')^{-\kappa}(\|\tau\|_{L^2}\|\nabla \rho\|_{L^2}+\|\tau\|_{H^{s-1-\delta}}\|\nabla \rho\|_{H^{s-1-\delta}})ds'  \\ \notag
&\leq C(\int_{0}^{t}(1+ t-s')^{-2\kappa}\|g\|_{H^{s-1-\delta}(\mathcal{L}^2)}\|\nabla_R g\|_{H^{s-1-\delta}(\mathcal{L}^2)}ds')^{\frac 1 2}  \\ \notag
&\leq C(\int_{0}^{t}(1+ t-s')^{-4\kappa}\|g\|^2_{H^{s-1-\delta}(\mathcal{L}^2)}ds')^{\frac 1 4}  \\ \notag
&\leq C(\int_{0}^{t}(1+ t-s')^{-4\kappa}(\|g\|^2_{L^2}+\|g\|^{2\frac {1+\delta} s}_{L^2}\|g\|^{2-2\frac {1+\delta} s}_{H^{s}(\mathcal{L}^2)})ds')^{\frac 1 4}  \\ \notag
&\leq C(1+t)^{-\kappa}.
\end{align}
Using \eqref{tau1}, Remark \ref{rema} and Theorem \ref{th1}, we have
\begin{align}\label{ineq9}
&\|\int_{0}^{t}e^{(t-s')A}|\xi|^{s-1-\delta}\mathcal{F}_x(0,{\rm div}(i(\rho)\tau))^T ds'\|_{L^2}  \\ \notag
&\leq C\int_{0}^{t}(\int_{|\xi|\leq r_1}e^{2(t-s')A}|\xi|^{2s-2\delta}|\widehat{(i(\rho)\tau)}|^2 d\xi)^{\frac 1 2}+(\int_{|\xi|> r_1}e^{2(t-s')A}|\xi|^{2s-2\delta}|\widehat{i(\rho)\tau}|^2 d\xi)^{\frac 1 2}ds'  \\ \notag
&\leq C\int_{0}^{t}(1+ t-s')^{-\kappa}(\|\widehat{(i(\rho)\tau)}\|_{L^d}+\|(i(\rho)-1)\tau\|_{H^{s-\delta}}+\|\tau\|_{H^{s-\delta}})ds'  \\ \notag
&\leq C\int_{0}^{t}(1+ t-s')^{-\kappa}(\|\tau\|_{L^{\frac d {d-1}}}+\|\tau\|_{H^{s-\delta}})ds'  \\ \notag
&\leq C\int_{0}^{t}(1+ t-s')^{-\kappa}(\|\tau\|^{\frac 2 d}_{L^{2}}\|\tau\|^{1-\frac 2 d}_{L^{1}}+\|\tau\|_{L^{2}}+\|\tau\|^{\frac \delta s}_{L^{2}}\|\nabla_R g\|^{1-\frac \delta s}_{H^{s}(\mathcal{L}^2)})ds'  \\ \notag
&\leq C\int_{0}^{t}(1+ t-s')^{-\kappa}e^{-Cs'}(\|\nabla_R g\|^{\frac 1 d}_{L^{2}(\mathcal{L}^2)}+\|\nabla_R g\|^{\frac 1 2}_{L^{2}(\mathcal{L}^2)}+\|\nabla_R g\|^{1-\frac \delta {2s}}_{H^{s}(\mathcal{L}^2)})ds'  \\ \notag
&\leq C(1+t)^{-\kappa}.
\end{align}
It follows from \eqref{ineq6}-\eqref{ineq9} that
\begin{align}\label{ineq10}
\||\xi|^{s-1-\delta}\widehat{U}\|_{L^2}\leq C(1+t)^{-\kappa}+C\epsilon^{\frac 1 2} (\int_{0}^{t}(1+ t-s')^{-2\kappa}\|U\|^2_{H^{s-1-\delta}}ds')^{\frac 1 2}.
\end{align}
Define $M(t)=\sup_{0\leq s'\leq t} (1+s')^{\frac d 4} \|U(s')\|_{H^{s-1-\delta}}$. According to \eqref{ineq5} and \eqref{ineq10}, if $d\geq 3$, then we have
$$M(t)\leq C+C\epsilon^{\frac 1 2}M(t).$$
Finally, we get $M(t)\leq C$, which means that we obtain optimal decay rate \eqref{decay} of $\rho$ and $u$. We thus complete the proof the Theorem \ref{th2}.
\hfill$\Box$
\begin{rema}
	In Theorem \ref{th2}, we only obtain optimal decay rate with $d\geq3$. The decay rate for $d=2$ is an interesting problem. Global existence of the strong
	solutions for the FENE dumbbell model with large data is a challenging problem. However, the
	technique in this paper fails to deal with the problems. We are going to study about these problems in the future.
\end{rema}

\smallskip
\noindent\textbf{Acknowledgments} This work was
partially supported by the National Natural Science Foundation of China (No.12171493 and No.11671407), the Macao Science and Technology Development Fund (No. 0091/2018/A3), Guangdong Province of China Special Support Program (No. 8-2015),
the key project of the Natural Science Foundation of Guangdong province (No. 2016A030311004), and National Key R$\&$D Program of China (No. 2021YFA1002100).

\noindent\textbf{Data Availability.}
The data that support the findings of this study are available on citation. The data that support the findings of this study are also available
from the corresponding author upon reasonable request.


\phantomsection
\addcontentsline{toc}{section}{\refname}
\bibliographystyle{abbrv} 
\bibliography{Feneref}

\end{document}